\documentclass[10pt]{article}

\usepackage{a4wide}
\addtolength{\textheight}{-0.75in} 
\usepackage[english]{babel}
\usepackage{amssymb}
\usepackage{amsmath}
\usepackage{amsthm}
\usepackage{array}
\usepackage{graphicx}
\usepackage{verbatim}
\usepackage{url}
\usepackage{float}
\usepackage{framed}
\usepackage{ifthen}
\usepackage{dcolumn} \newcolumntype{.}{D{.}{.}{3.1}}
\usepackage[usenames,dvipsnames]{color}

\renewcommand{\qed}{\hfill\ensuremath{\Box}}
\renewcommand{\phi}{\varphi}
\renewcommand{\theta}{\vartheta}
\renewcommand{\epsilon}{\varepsilon}

\newtheorem{theorem}[equation]{Theorem}
\newtheorem{lemma}[equation]{Lemma}
\newtheorem{proposition}[equation]{Proposition}

\theoremstyle{definition}

\renewenvironment{proof}[1][Proof]{\begin{trivlist}\item[\hskip \labelsep {\bfseries #1}]}{\qed\end{trivlist}}

\makeatletter
\newenvironment{myleftbar}{%
  \MakeFramed {\advance\hsize-\width}
}{%
  \endMakeFramed%
}
\makeatother
\newtheoremstyle{example}
  {\topsep} {\topsep}%
  {\upshape}
  {}
  {\bfseries}
  {.}
  {\parindent}
  {\thmname{#1}\thmnumber{ #2}\thmnote{#3}}
\theoremstyle{example}
\newtheorem{xexample}[equation]{Example}
\newenvironment{example}{ \begin{myleftbar} \begin{xexample} }{ \end{xexample}\end{myleftbar} }

\newtheoremstyle{examplecont}
  {\topsep} {\topsep}%
  {\upshape}
  {}
  {\bfseries}
  {.}
  {\parindent}
  {\thmname{#1}\thmnumber{ #2} \thmnote{#3}\enspace(continued)}
\theoremstyle{examplecont}
\newtheorem*{xexamplecont}{Example}

\newfloat{myalg}{tbp}{alg} \floatname{myalg}{Algorithm}
\newenvironment{alg}{\setcounter{myalg}{\arabic{equation}} \begin{myalg}}{\end{myalg}\setcounter{equation}{\arabic{myalg}}}

\newcommand{\tmpalgoname}{}
\newcommand{\tmpalgocap}{}
\newcommand{\tmpalgolbl}{}
\newenvironment{algorithm}[4][def]{%
  \ifthenelse{\equal{#1}{def}}{}{}%
  \renewcommand{\tmpalgoname}{#2}%
  \renewcommand{\tmpalgocap}{#3}%
  \renewcommand{\tmpalgolbl}{#4}%
  \begin{alg}%
  \begin{tabbing}%
  \quad\quad\=\quad\=\quad\=\quad\=\quad\=\quad\=\quad\=\kill%
  {\sc \tmpalgoname} \\
}{%
  \end{tabbing}%
  \caption{\tmpalgocap}\label{alg_\tmpalgolbl}%
  \end{alg} %
}
\newcounter{algln}
\newcommand{\lnreset}{\setcounter{algln}{0}}
\newcommand{\lnp}{\addtocounter{algln}{1} {\footnotesize \arabic{algln}}}

\newcommand{\compdata}[8]
{	
	\begin{minipage}{5mm} 
		(#1) \\
	\end{minipage} 
	&
	\begin{minipage}{20mm}
		\begin{tabular}{c}
			\includegraphics[height=18mm]{#2} \cr
			\vspace{1mm}
		\end{tabular}
	\end{minipage} 
	&
	\begin{minipage}{45mm}
		\begin{tabular}{ll}
			$\dim(X)$:  & $#3$ \\
			$\dim(L)$:  & $#4$ \\
			$L/\Rad(L)$: & \ifthenelse{\equal{#5}{0}}{#6}{$#5$-dim \ifthenelse{\equal{#6}{}}{}{($#6$)}} \\
			runtime:    & #7 \\
			\ifthenelse{\equal{#8}{}}{}{\multicolumn{2}{l}{\emph{#8}} \\}
		\end{tabular}
	\end{minipage} 
}

\renewcommand{\theenumi}{(\roman{enumi})}

\newcommand{\Magma}{{\sc Magma}}

\newcommand{\ad}{\operatorname{ad}}
\newcommand{\bigO}{O}
\newcommand{\Norm}{\operatorname{N}}
\newcommand{\F}{\mathbb{F}}
\newcommand{\SC}{^\mathrm{SC}}
\newcommand{\Ad}{^\mathrm{ad}}
\newcommand{\GF}{\operatorname{GF}}
\newcommand{\ch}{^\vee}
\newcommand{\chr}{\operatorname{char}}
\newcommand{\Center}{\operatorname{Z}}
\newcommand{\Cent}{\operatorname{C}}
\newcommand{\Hom}{\operatorname{Hom}}
\newcommand{\Lie}{\operatorname{Lie}}
\newcommand{\LZ}{L_{\mathbb Z}}
\newcommand{\LF}{L_{\mathbb F}}
\newcommand{\rk}{\operatorname{rk}}
\newcommand{\Z}{\mathbb{Z}}

\title{Computing Split Maximal Toral Subalgebras of Lie Algebras over Fields of Small Characteristic}
\author{Dan Roozemond, University of Sydney}

\begin{document}
\maketitle

\begin{abstract}
Important subalgebras of a Lie algebra of an algebraic group are its toral subalgebras, or equivalently (over fields of characteristic 0) its Cartan subalgebras. Of great importance among these are ones that are split: their action on the Lie algebra splits completely over the field of definition.
While algorithms to compute split maximal toral subalgebras exist and have been implemented \cite{Ryba07,CM06}, these algorithms fail when the Lie algebra is defined over a field of characteristic $2$ or $3$. 

We present heuristic algorithms that, given a reductive Lie algebra $L$ over a finite field of characteristic $2$ or $3$, find a split maximal toral subalgebra of $L$. Together with earlier work \cite{CR09} these algorithms are very useful for the recognition of reductive Lie algebras over such fields.
\end{abstract}

\section{Introduction}\label{sec_introduction}
For computational problems regarding a split reductive algebraic group $G$
defined over a field $\F$, it is often useful to calculate within its Lie
algebra $L$ over $\F$. For instance, the conjugacy question for two split
maximal tori in $G$ can often be translated to a conjugacy question for two
split toral subalgebras of $L$. A \emph{maximal toral subalgebra} $H$ of
$L$ is a commutative subalgebra consisting of semisimple elements (recall 
that an element $x \in L$ is called semisimple if it is contained in the 
$p$-subalgebra generated by $x^p$), and it is maximal (with respect to 
inclusion) among subalgebras of $L$ with these properties. 
For such a subalgebra we have that multiplication (in $L$) by each of 
its elements is semisimple, i.e.,~each of its elements has a
diagonal form with respect to a suitable basis over a large enough extension
field of $\F$. A maximal toral subalgebra $H$ is called \emph{split} (or
$\F$-\emph{split}) if, for every $h\in H$, left multiplication by $h$, 
denoted $\ad_h$, has a diagonal form with respect to a suitable basis over $\F$.
Such a subalgebra is the Lie algebra of a split maximal torus in $G$.
Recall that $\hat{H}$ is called a \emph{Cartan subalgebra} of $L$ if $\hat{H}$
is nilpotent and $\Norm_L(\hat{H}) = \hat{H}$.
Maximal toral subalgebras and Cartan subalgebras are closely related:
if $H$ is a maximal toral subalgebra of $L$, then $\Cent_L(H)$ is a
Cartan subalgebra of $L$ \cite[Proposition 15.1]{Hum67}.

In the case that $\F$ is not of characteristic $2$ or $3$ a Las Vegas algorithm exists to compute split maximal toral subalgebras, due to Cohen and Murray \cite[Lemma 5.7]{CM06}. Independently, Ryba developed a Las Vegas algorithm for computing split Cartan subalgebras \cite{Ryba07}. Unfortunately, Ryba also excludes characteristic $2$ and, if the Lie algebra is of type $\mathrm A_2$ or $\mathrm G_2$, characteristic $3$. It is, however, claimed that the algorithm may work in some cases in characteristic $2$, but not in all cases (cf.~\cite[Section 9.3]{Ryba07}).

In this paper we present heuristic algorithms that, given a reductive Lie algebra $L$ over a finite field of characteristic $2$ or $3$, find a split maximal toral subalgebra of $L$. We present separate algorithms for the two characteristics. Each of these algorithms is Las Vegas: it either returns a subalgebra $H$ of the correct form (with nonzero probability), or it returns \textbf{fail}. 
The algorithm for the characteristic $2$ case has been described before in the author's PhD thesis \cite[Chapter 3]{roozemond10}.

\subsection*{The remainder of this paper}
The remainder of this section sets the theoretical framework we need to describe the algorithms.
In Section \ref{sec_scsa_C4} we investigate a particular Lie algebra and a split maximal toral subalgebra that is not contained in a split toral subalgebra of maximal dimension. In Section \ref{sec_stsa_countingrss_new} we study the occurrence (or lack thereof) of regular semisimple elements in Lie algebras over fields of characteristic $2$, showing that the Las Vegas algorithm by Cohen and Murray cannot easily be applied in those cases. 
In Section \ref{sec_mstsa_algorithm3} we describe a heuristic algorithm to find split maximal toral subalgebras in the characteristic $3$ case, and 
in Section \ref{sec_mstsa_algorithm2} we describe such an algorithm for the characteristic $2$ case. 
Finally, in Section \ref{sec_stsa_notesimpl} we comment on the implementation and performance of these algorithms.

\subsection*{Root data}
Our treatment of Lie algebras and the corresponding algebraic groups rests on
the theory developed mainly by Chevalley and available in the excellent books
by Borel \cite{Bor91}, Humphreys \cite{Hum75}, and Springer \cite{Springer98}.
Our set-up is as in \cite{CR09}, a publication we will repeatedly refer to 
because of the extensive analysis it contains on the structure of reductive Lie 
algebras over fields of small characteristic.
We refer to \cite{CR09} for more details on our set-up and restrict ourselves 
to the essential notions here.

Split reductive algebraic groups are determined by their fields of definition
and their root data \cite[Theorem 9.4.3]{Springer98}. Throughout this paper we let $R
= (X, \Phi, Y, \Phi\ch)$ be a \emph{root datum of rank} $n$. 
This means $X$ and $Y$ are dual free $\Z$-modules of dimension
$n$ with a bilinear pairing $\langle \cdot, \cdot \rangle : X \times Y
\rightarrow \Z$ such that the induced map $X \rightarrow \Hom(Y, \Z)$
is an isomorphism (and then so is the induced map $Y \rightarrow \Hom(X, \Z)$), 
$\Phi$ is a finite subset of $X$ and $\Phi\ch$ a finite subset of $Y$, 
and called the \emph{roots} and \emph{coroots},
respectively, and there is a one-to-one correspondence $\ch : \Phi \rightarrow
\Phi\ch$ such that $\langle \alpha, \alpha\ch \rangle = 2$ for all $\alpha \in
\Phi$. Both the roots $\Phi$ and the coroots $\Phi\ch$ should form a root system 
in the traditional sense.
The irreducible root systems are well-known, and described in Cartan's notation 
${\rm A}_n$ $(n\ge1)$, ${\rm B}_n$ $(n\ge2)$, ${\rm C}_n$ $(n\ge3)$, 
${\rm D}_n$ $(n\ge4)$, ${\rm E}_n$ $(n\in\{6,7,8\})$, ${\rm F}_4$, ${\rm G}_2$. 

A \emph{weight} is a vector $w$ in the Euclidian space $X \otimes \mathbb{R}$
such that $\langle w, \alpha\ch \rangle \in \Z$ for all $\alpha \in \Phi$.
These weights form a weight lattice, and the \emph{fundamental group} is defined
to be the quotient of this weight lattice by the root lattice $\Z \Phi$.
The subgroups of this fundamental group parametrize the possible semisimple root data with a given
root system $\Phi$ via the quotient $X/\Z \Phi$. 
For sake of completeness we remark that the fundamental group is 
$\Z/(n+1)\Z$ for $\mathrm A_n$; $\Z/2\Z$ for $\mathrm B_n$ and $\mathrm C_n$;
$\Z/4\Z$ for $\mathrm D_n$ if $n$ is odd, $\Z/2\Z+\Z/2\Z$ for $\mathrm D_n$ 
if $n$ is even; $\Z/3\Z$ for $\mathrm E_6$; $\Z/2\Z$ for $\mathrm E_7$; 
and it is trivial for $\mathrm E_8$, $\mathrm F_4$, and $\mathrm G_2$.

We use this observation to 
define the \emph{isogeny type} of a root datum. If $X/\Z \Phi$ is the trivial
group, $R$ is said to be of \emph{adjoint} isogeny type; if on the other hand
$X/\Z \Phi$ is the full fundamental group, $R$ is said to be of \emph{simply 
connected} isogeny type. If neither of these is the case, $R$ is said to be
of \emph{intermediate} isogeny type. Note that the last case only occurs for
root systems of type $\mathrm A_n$ and $\mathrm D_n$.

We denote an adjoint root datum whose root system is of type $\mathrm X_n$ by
$\mathrm X_n^{\mathrm{ad}}$, and its simply connected variant by $\mathrm X_n^{\mathrm{SC}}$.
Intermediate root data of type $\mathrm A_n$ will be denoted by $\mathrm A_n^{(k)}$,
where $k | n+1$; intermediate root data of type $\mathrm D_n$ will be denoted by
$\mathrm D_n^{(1)}$ if $n$ is odd, and by $\mathrm D_n^{(1)}$, $\mathrm D_n^{(n-1)}$,
and $\mathrm D_n^{(n)}$ if $n$ is even.

\begin{example}\label{ex_rootdata_A1}
There are two root data of type $\mathrm A_1$, namely $\mathrm A_1\Ad$ and $\mathrm A_1\SC$.
In both cases, $X = Y = \mathbb Z$; we may take $e_1 = (1)$ to be a basis of $X$ and $f_1 = (1)$
a basis of $Y$. 
For $\mathrm A_1\Ad$, the roots are taken to be $\alpha = (1)$, $-\alpha = (-1)$,
the coroots $\alpha\ch = (2)$, $-\alpha\ch = (-2)$, and the pairing between $X$ and $Y$ is
simply multiplication, so that $\langle \alpha, f_1 \rangle = 1$ and $\langle e_1, \alpha\ch \rangle = 2$. 

Conversely, for $\mathrm A_1\SC$, the roots are taken to be $\alpha = (2)$, $-\alpha = (-2)$,
the coroots $\alpha\ch = (1)$, $-\alpha\ch = (-1)$, and the pairing again is multiplication,
so that $\langle \alpha, f_1 \rangle = 2$ and $\langle e_1, \alpha\ch \rangle = 1$. 
\end{example}

\subsection*{Chevalley Lie algebras}
Given a root datum $R$ we consider the free $\Z$-module 
\[
\LZ(R) = Y \oplus \bigoplus_{\alpha \in \Phi} \Z X_\alpha,
\]
where the $X_\alpha$ are formal basis elements. The rank of $\LZ(R)$ is $n +
|\Phi|$.  We denote by $[\cdot,\cdot]$ the alternating bilinear map $\LZ(R)
\times \LZ(R) \rightarrow \LZ(R)$ determined by the following rules:
\[
\begin{array}{lrclr}
\mbox{For } y, z \in Y : & [y, z] & = & 0, & \mathrm{(CB}\mathbb Z\mathrm{1)}\\
\mbox{For } y \in Y, \alpha \in \Phi : & [ X_\alpha, y ] & = & \langle \alpha, y \rangle X_\alpha , & \mathrm{(CB}\mathbb Z\mathrm{2)}\\
\mbox{For } \alpha \in \Phi : & [ X_{-\alpha}, X_\alpha ] & = & \alpha\ch , & \mathrm{(CB}\mathbb Z\mathrm{3)}\\
\mbox{For } \alpha,\beta \in \Phi, \alpha \neq \pm \beta: & [X_\alpha, X_\beta] & = & 
  \left\{ \begin{array}{ll}
  N_{\alpha,\beta} X_{\alpha+\beta} & \mbox{if } \alpha + \beta \in \Phi, \cr
  0 & \mbox{otherwise.}
  \end{array} \right. & \mathrm{(CB}\mathbb Z\mathrm{4)}
\end{array}
\]
The $N_{\alpha,\beta}$ are integral structure constants chosen to be $\pm(p_{\alpha,\beta}+1)$, 
where $p_{\alpha,\beta}$ is the biggest number such that $\alpha
-p_{\alpha,\beta}\beta$ is a root and the signs are chosen (once and for all)
so as to satisfy the
Jacobi identity.
It is easily verified that $N_{\alpha,\beta} = -N_{-\alpha,-\beta}$ and it is a well-known result 
(see for example \cite[Proposition 4.4.2]{Car72}) that  such a product exists.
$\LZ(R)$ is called a \emph{Chevalley Lie algebra}.

For any field $\F$ we define $L_\F(R)$ to be the Lie algebra $\LZ(R) \otimes \F$.

\begin{example}\label{ex_chevlie_A1}
Corresponding to the two root data of type $\mathrm A_1$ described in Example \ref{ex_rootdata_A1}
there exist two Chevalley Lie algebras of type $\mathrm A_1$. Both are $3$-dimensional and have formal
basis elements $X_\alpha$, $X_{-\alpha}$, and $h$, but their multiplication differs.

The multiplication for the Lie algebra of type $\mathrm A_1\Ad$ is determined by $[X_\alpha, X_{-\alpha}] = -2h$, $[X_\alpha,h] = X_\alpha$, and $[X_{-\alpha},h] = -X_{-\alpha}$ (observe that the multiplication on all other algebra elements follows from bilinearity and anti-symmetry). 
On the other hand, for $\mathrm A_1\SC$ we have $[X_\alpha, X_{-\alpha}] = -h$, $[X_\alpha,h] = 2X_\alpha$, and $[X_{-\alpha},h] = -2X_{-\alpha}$. 

It is easy to see that $L_\F(\mathrm A_1\Ad)$ and $L_\F(\mathrm A_1\SC)$ are isomorphic unless
$\chr(\F) = 2$.
\end{example}

A basis of $\LZ(R)$ that consists of a basis of $Y$ and the formal elements
$X_\alpha$ and satisfies $\mathrm{(CB}\mathbb
Z\mathrm{1)}$--$\mathrm{(CB}\mathbb Z\mathrm{4)}$ is called a \emph{Chevalley
basis} of the Lie algebra $\LZ(R)$ with respect to the split Cartan subalgebra
$Y$ and the root datum $R$.  If no confusion is imminent we just call this a
\emph{Chevalley basis} of $\LZ(R)$.

The interest in Chevalley Lie algebras comes from the following result.

\begin{theorem}[Chevalley \cite{Chev58}]\label{thm_liealg_has_chevbas}
Suppose that $L$ is the Lie algebra of a split semisimple algebraic group $G$
over $\F$ with root datum $R = (X,\Phi,Y,\Phi\ch)$. Then $L \cong \LF(R)$,
and if $G$ is simple then $R$ is irreducible.
\end{theorem}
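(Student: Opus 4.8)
The plan is to reconstruct the Chevalley relations (CB$\Z$1)--(CB$\Z$4) intrinsically from the group $G$ and then match the resulting basis to the one defining $\LF(R)$. First I would fix a split maximal torus $T \le G$ whose character and cocharacter lattices are $X$ and $Y$, and set $H = \Lie(T) \le L$. Since $T$ is split, its adjoint action on $L$ is diagonalizable over $\F$, so $L$ decomposes into weight spaces indexed by characters of $T$, that is, by elements of $X$. By semisimplicity of $G$ one has $\Cent_G(T) = T$, so the zero weight space is exactly $H$, and the remaining weights are precisely the roots (the weight set being symmetric, it coincides with $\Phi$). This gives the decomposition
\[
L = H \oplus \bigoplus_{\alpha \in \Phi} L_\alpha.
\]
The structure theory of split reductive groups (as in \cite{Springer98,Bor91}) identifies $H$ with $Y \otimes \F$ as an abelian Lie algebra, which is relation (CB$\Z$1), and shows each $L_\alpha$ is one-dimensional. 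Choosing a nonzero $X_\alpha \in L_\alpha$, the weight-space property becomes $[X_\alpha, y] = \langle \alpha, y \rangle X_\alpha$ for $y \in H$, which is (CB$\Z$2) in the paper's convention.

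Next I would install the normalizations of (CB$\Z$3). For each $\alpha$ the bracket $[X_{-\alpha}, X_\alpha]$ lies in $H$, and pairing against characters identifies it up to scalar with the coroot $\alpha\ch \in Y \subseteq H$; the $\mathfrak{sl}_2$-triple generated by $X_\alpha$ and $X_{-\alpha}$, together with its representation theory, then lets me rescale $X_{\pm\alpha}$ so that $[X_{-\alpha}, X_\alpha] = \alpha\ch$ holds exactly. For (CB$\Z$4), given non-proportional $\alpha,\beta$ the bracket $[X_\alpha, X_\beta]$ lands in $L_{\alpha+\beta}$, hence vanishes unless $\alpha+\beta \in \Phi$ and is otherwise a scalar $N_{\alpha,\beta}$ times $X_{\alpha+\beta}$. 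Analyzing the $\alpha$-root string through $\beta$ under the $\mathfrak{sl}_2$ generated by $X_{\pm\alpha}$ pins down the absolute value $|N_{\alpha,\beta}| = p_{\alpha,\beta}+1$.

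The main obstacle is the consistency of the \emph{signs}. I must arrange the root vectors so that all the $N_{\alpha,\beta}$ simultaneously take values $\pm(p_{\alpha,\beta}+1)$ compatibly with the Jacobi identity, and then reconcile this sign system with the fixed choice made once and for all in the definition of $\LF(R)$. The existence of an admissible sign system is exactly the classical fact cited as \cite[Proposition 4.4.2]{Car72}. To finish, I would observe that any two admissible sign systems differ by a rescaling $X_\alpha \mapsto \epsilon_\alpha X_\alpha$ with $\epsilon_\alpha = \pm 1$ (chosen compatibly on $\pm\alpha$), that such a rescaling is a Lie algebra automorphism fixing $H$, and hence that composing the identifications above with an appropriate rescaling yields the isomorphism $L \cong \LF(R)$. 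Note that no assumption on $\chr(\F)$ is needed here: the argument produces a Chevalley basis regardless of whether $L$ is perfect or centreless.

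Finally, for the irreducibility claim, suppose $G$ is simple but $\Phi$ decomposes as an orthogonal disjoint union $\Phi = \Phi_1 \sqcup \Phi_2$. Then the spans of the corresponding root vectors, together with the coroots $\alpha\ch$ for $\alpha$ in each part, generate two nonzero commuting ideals $L_1, L_2$ of $L$. These integrate to proper closed connected normal subgroups $G_1, G_2$ with $G = G_1 G_2$, exhibiting $G$ as a nontrivial almost-direct product and contradicting its simplicity. Hence $\Phi$ is irreducible.
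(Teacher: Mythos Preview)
The paper does not prove this theorem; it merely quotes it as a classical result of Chevalley and cites \cite{Chev58}. So there is no ``paper's own proof'' to compare against, and your proposal must be judged on its own merits.

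Your outline is essentially the standard characteristic-zero argument, and that is exactly where it breaks. You assert that ``no assumption on $\chr(\F)$ is needed here'', but two of your steps genuinely require it. First, the use of $\mathfrak{sl}_2$-representation theory to normalise $[X_{-\alpha},X_\alpha]=\alpha\ch$ and to pin down $|N_{\alpha,\beta}| = p_{\alpha,\beta}+1$ relies on the $\alpha$-string through $\beta$ being an irreducible $\mathfrak{sl}_2$-module of the expected dimension; this fails when $p$ is small. Second, and more fundamentally, the integers $p_{\alpha,\beta}+1$ can be $2$ or $3$ (in types $\mathrm B$, $\mathrm C$, $\mathrm F_4$, $\mathrm G_2$), so over $\F$ of characteristic $2$ or $3$ the corresponding $N_{\alpha,\beta}$ vanish and cannot be reconstructed from the bracket on $L$ alone. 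You therefore cannot produce, intrinsically over $\F$, a basis whose structure constants \emph{are the reductions of} the fixed integral constants defining $\LF(R)$; at best you produce some basis with some constants, with no mechanism to match them to the chosen $N_{\alpha,\beta}$.

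The way Chevalley's theorem is actually proved avoids this entirely: one constructs the split group $G$ as the base change to $\F$ of a group scheme over $\Z$ (the Chevalley--Demazure scheme) whose Lie algebra is $\LZ(R)$ by construction; since formation of the Lie algebra commutes with base change, $L = \Lie(G) \cong \LZ(R)\otimes\F = \LF(R)$. The isomorphism comes from descent through the integral model, not from rebuilding a Chevalley basis over $\F$. Your irreducibility paragraph is fine.
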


In light of this theorem, we will view all Lie algebras as Chevalley Lie algebras in the remainder of this paper. Moreover, we will only consider Chevalley Lie algebras with an irreducible root datum, as the algorithms presented easily generalise to the case of an arbitrary root datum.

We assume Lie algebras to be given as a vector space together with a list of structure constants that define the products of vectors, i.e., we are given a field $\F$, a $d$-dimensional vector space $V$ over $\F$ with basis $b_1, \ldots, b_d$, and structure constants $c_{ijk}$ that are understood to mean
\[
[b_i, b_j] = \sum_{k=1}^d c_{ijk} b_k.
\]

\begin{example}
We consider the Lie algebra for $\mathrm A_1\Ad$ defined in Example \ref{ex_chevlie_A1}. If we take the basis elements to be $b_1 = X_{\alpha}$, $b_2 = X_{-\alpha}$, and $b_3 = h$, the only nonzero $c_{ijk}$ are: $c_{123} = -2$, $c_{131} = 1$, $c_{213} = 2$, $c_{232} = -1$, $c_{311} = -1$, and $c_{322} = 1$.
\end{example}

This is the standard way of representing a finite dimensional Lie algebra on a computer (see \cite[Section 1.5]{deGraaf00}).
In general, of course, the basis we are given is arbitrary and not of a particularly nice form such as a Chevalley basis. In fact, when a Chevalley basis is known a split maximal toral subalgebra is easily recovered: it simply is $Y$.

\subsection*{Root spaces}
Let $R=(X,\Phi,Y,\Phi\ch)$ be a root datum, fix a basis $\{ y_1, \ldots, y_n \}$ of $Y$, let $\F$ be a field, $L = L_\F(R)$ the Lie algebra of type $R$ over $\F$, and let $H = Y \otimes \F$ and $h_i = y_i \otimes 1_\F$. We call $H$ the \emph{standard split maximal toral subalgebra} of $L$. We define a \emph{root of $H$ on $L$} to be the function
\[
	\overline{\alpha}: H \rightarrow \mathbb Z, h \mapsto \sum_{i=1}^n \langle \alpha, y_i \rangle t_i, \mbox{ where } h = \sum_{i=1}^n y_i \otimes t_i = \sum_{i=1}^n t_i h_i,
\]
where $n$ is the rank of $R$. Note that $\overline{\alpha}$ actually maps into $\F$, but by construction the image actually consists of integers (cf. Equations $\mathrm{(CB}\mathbb Z\mathrm{1)}$ -- $\mathrm{(CB}\mathbb Z\mathrm{4)}$).
Furthermore, we define the \emph{root space corresponding to $\overline{\alpha}$} to be
\[
L_{\alpha} = \bigcap_{i=1}^n \operatorname{Ker}(\ad_{h_i} - \overline{\alpha}(h_i)).
\]

\begin{example}
We consider Lie algebras with root data $\mathrm A_2\Ad$ and $\mathrm A_2\SC$ over a number of different fields. We denote the roots by $\pm \alpha_1, \pm \alpha_2, \pm(\alpha_1+\alpha_2)$.

First, suppose $\F = \mathbb Q$. In this case, for both $\mathrm A_2\Ad$ and $\mathrm A_2\SC$, all $L_\alpha$ are $1$-dimensional and distinct. For example, $L_{\alpha_2} = \mathbb Q X_{\alpha_2}$.

Second, suppose $\F$ is a field of characteristic $2$. Then, for both $\mathrm A_2\Ad$ and $\mathrm A_2\SC$, all $L_\alpha$ are $2$-dimensional. For example, $L_{\alpha_1} = L_{-\alpha_1} = \F X_{\alpha_1} + \F X_{-\alpha_1}$.

Finally, suppose $\F$ is a field of characteristic $3$. Recall from Equation (CB$\mathbb Z$2) that the action of $H$ on $L$ (in particular on the $X_\alpha$) depends on the isogeny type of the root datum, so that the root spaces may differ between $\mathrm A_2\Ad$ and $\mathrm A_2\SC$. In this case they indeed do. For $\mathrm A_2\Ad$, all $L_\alpha$ are $1$-dimensional and distinct. However, for $\mathrm A_2\SC$, we have 
\begin{align*}
L_{\alpha_1} = L_{\alpha_2} = L_{-\alpha_1-\alpha_2} & = \F X_{\alpha_1} + \F X_{\alpha_2} + \F X_{-\alpha_1-\alpha_2}, \mbox{ and} \cr
L_{-\alpha_1} = L_{-\alpha_2} = L_{\alpha_1+\alpha_2} & = \F X_{-\alpha_1} + \F X_{-\alpha_2} + \F X_{\alpha_1+\alpha_2}.
\end{align*}
\end{example}

\section{A characteristic $2$ curiosity}\label{sec_scsa_C4}
For the development of a recursive algorithm for finding split maximal toral subalgebras it would be very helpful 
to know in advance that every split toral subalgebra is contained in a split toral subalgebra that is maximal among all (not necessarily split) toral subalgebras.
The algorithm by Cohen and Murray relies on a similar (but weaker) assertion (cf.~\cite[Proposition 5.8]{CM06}).
This is, however, not in general true in characteristic $2$, as we will show in the following example.

We consider the Chevalley Lie algebra $L$ of type $\mathrm C_4\SC$
over $\GF(2)$, with root datum $R = (X,\Phi,Y,\Phi\ch)$ and Chevalley basis elements $\{ X_\alpha, h_i \mid \alpha \in \Phi, i \in \{1, \ldots, 4\} \}$.
Furthermore, we denote the simple roots of $\Phi$ by $\alpha_1, \ldots, \alpha_4$, where $\alpha_1, \alpha_2$, and $\alpha_3$ are short roots, and $\alpha_4$ is long. Its non-simple positive roots are then
\begin{align*}
	\alpha_{5} = (1, 1, 0, 0), \alpha_{6} = (0, 1, 1, 0), \alpha_{7} = (0, 0, 1, 1), \alpha_{8} = (1, 1, 1, 0), \cr
	\alpha_{9} = (0, 1, 1, 1),\alpha_{10} = (0, 0, 2, 1), \alpha_{11} = (1, 1, 1, 1),	\alpha_{12} = (0, 1, 2, 1), \cr
	\alpha_{13} = (1, 1, 2, 1), \alpha_{14} = (0, 2, 2, 1), \alpha_{15} = (1, 2, 2, 1),	\alpha_{16} = (2, 2, 2, 1),	
\end{align*}
where $(c_1, c_2, c_3, c_4)$ denotes $c_1 \alpha_1 + c_2 \alpha_2 + c_3 \alpha_3 + c_4 \alpha_4$, and the negative roots are defined accordingly.
Now let 
\begin{align*}
y_1 & = h_1 + h_3 \in \Center(L),\cr
y_2 & = h_1 + X_{\alpha_{12}} + X_{-\alpha_{8}}, \cr
y_3 & = h_2 + X_{\alpha_{3}} + X_{-\alpha_{3}} + X_{\alpha_{15}} + X_{-\alpha_{15}},
\end{align*}
and $H = \langle y_1, y_2, y_3 \rangle_L$.

\newcommand{\Hbar}{H'}
\newcommand{\Lzbar}{L_0'}
\begin{proposition}\label{prop_mtsa_counterex}
The subalgebra $H$ is a $3$-dimensional split toral subalgebra of $L$. 
There, however, does not exist a split toral subalgebra $\Hbar$ of $L$ of dimension $4$ such that $H \subseteq \Hbar$.
\end{proposition}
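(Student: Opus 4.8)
The plan is to prove the two assertions separately, reducing the second to a single statement about the centralizer of $H$.

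For the first assertion I would verify directly that $H$ is a $3$-dimensional split toral subalgebra. First I would check that $y_1, y_2, y_3$ are linearly independent and pairwise commute; since all brackets among the generators then vanish, the generated subalgebra is exactly the span $\F y_1 + \F y_2 + \F y_3$, which is commutative of dimension $3$. It remains to see that every element is split toral, and over $\GF(2)$ this is pleasantly rigid: an operator is diagonalizable over $\GF(2)$ precisely when its minimal polynomial divides $t^2+t = t(t+1)$, so an element $h$ spans a split torus if and only if $\ad_h^2 = \ad_h$. I would therefore compute each $\ad_{y_i}$ on the Chevalley basis and check $\ad_{y_i}^2 = \ad_{y_i}$. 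Since in characteristic $2$ the sum of two commuting idempotent operators is again idempotent (if $[\ad_x,\ad_y]=0$ then $(\ad_x+\ad_y)^2 = \ad_x^2 + \ad_y^2 = \ad_x + \ad_y$), it then follows automatically that $\ad_h^2 = \ad_h$ for every $h \in H$, so $H$ is split toral.

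For the second assertion I would argue by contradiction via the centralizer. Suppose $\Hbar$ is a split toral subalgebra of dimension $4$ with $H \subseteq \Hbar$. Since $\Hbar$ is commutative and contains $H$, we have $\Hbar \subseteq \Cent_L(H)$; and since $\dim \Hbar = \dim H + 1$ over $\GF(2)$, we may write $\Hbar = H \oplus \F z$ with $z \in \Cent_L(H) \setminus H$. Being an element of the split toral subalgebra $\Hbar$, the element $z$ is itself split toral, so $\ad_z^2 = \ad_z$. Conversely, the characteristic-$2$ observation above shows that any split toral $z \in \Cent_L(H)\setminus H$ would make $H \oplus \F z$ a $4$-dimensional split toral subalgebra. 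Hence the whole statement reduces to the single claim: \emph{every element $z \in \Cent_L(H)$ with $\ad_z^2 = \ad_z$ lies in $H$.}

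To establish this claim I would compute the centralizer $C = \Cent_L(H)$ explicitly. Because $y_1 \in \Center(L)$, the defining conditions are just $[z, y_2] = [z, y_3] = 0$, a linear system on the $36$-dimensional space $L = L_\F(\mathrm{C}_4\SC)$; solving it yields a basis of $C$ and a vector space decomposition $C = H \oplus M$. I would then take a general $z = h + m$ with $h \in H$, $m \in M$, expand the operator identity $\ad_z^2 + \ad_z = 0$, and show that over $\GF(2)$ its solution set is exactly $H$ — equivalently, that every nonzero $m \in M$ makes $\ad_{h+m}$ fail to be idempotent, for instance by exhibiting a basis vector on which $\ad_{h+m}^2$ and $\ad_{h+m}$ disagree, or by checking that the semisimple part of $z$ already lies in $H$. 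I expect this final step to be the main obstacle: it is the one place where the characteristic-$2$ degeneracies of $L$ (collapsing root spaces, the nontrivial centre) genuinely intervene, and carrying it out cleanly requires the explicit structure constants, so in practice it is a finite but computer-assisted verification in the $36$-dimensional algebra. The conceptual content is that $\Cent_L(H)$ does contain semisimple elements outside $H$ — these extend $H$ to a non-split maximal toral subalgebra of dimension $4$ — but none of them are $\F$-split, which is exactly the curiosity the proposition records.
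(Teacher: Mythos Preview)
Your proposal is correct in outline, but it takes a more brute-force route than the paper. Both proofs agree on the first assertion and on the initial reduction for the second: any fourth generator $z$ must lie in $\Cent_L(H)$, the $12$-dimensional $0$-eigenspace $L_0$ of $H$. Where you propose to search all of $L_0$ for elements with $\ad_z^2=\ad_z$ (a finite but genuinely computer-assisted quadratic system in a $12$-dimensional space over $\GF(2)$), the paper inserts one further structural step that shrinks the search dramatically. Using the known root-space dimensions for $\mathrm{C}_4^{\mathrm{SC}}$ in characteristic~$2$ (from \cite[Proposition~3]{CR09}), any $4$-dimensional split toral subalgebra $H'$ must also have a $12$-dimensional $0$-eigenspace $L_0'$; since $H\subseteq H'$ forces $L_0'\subseteq L_0$, equality $L_0'=L_0$ follows, and hence the new element $y$ must act as $0$ on all of $L_0$, i.e.\ $y\in \Cent_L(L_0)$. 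This centralizer is only $4$-dimensional and contains $H$, so modulo $H$ there is a \emph{single} candidate $y$, which the paper writes down explicitly and disqualifies by computing its characteristic polynomial $x^{16}(x+1)^4(x^2+x+1)^8$. Your approach avoids invoking the external structure result and would still succeed, but the paper's extra reduction turns an opaque $2^{12}$-point search into a one-line verification and pinpoints exactly where the obstruction lives.
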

\begin{proof}
It is straightforward to verify that $H$ is a split toral subalgebra of $L$: on diagonalization of $H$ in the adjoint representation we obtain $3$ eigenspaces of dimension $8$ (corresponding to roots $(0,1,0)$, $(0,0,1)$, and $(0,1,1)$) and an eigenspace $L_0$ of dimension $12$ (corresponding to the root $(0,0,0)$ and $H$ itself). 

Now suppose there exists a split toral subalgebra $\Hbar$ of dimension $4$ containing $H$.
This would imply the existence of a $y \in \Hbar$ such that $y \not\in H$ and $[y, H] = 0$. 
Furthermore, by the structure of the root spaces of $L$ (cf.~\cite[Proposition 3]{CR09}),
diagonalization with respect to $\Hbar$ would give $6$ eigenspaces of dimension $4$, and one eigenspace $\Lzbar$ of dimension $12$ (where $\Hbar \subseteq \Lzbar$).
This means in particular that $L_0 = \Lzbar$ and that $y$ should have a unique eigenvalue on $L_0$. 
Since $[y, H] = 0$ and $H \subseteq L_0$, the eigenvalue of $y$ on $L_0$ must be $0$, and thus $y \in \Cent_{\Hbar}(L_0)$, implying $y \in \Cent_L(L_0)$.

However, $\Cent_L(L_0)$ is $4$-dimensional and $y_1, y_2, y_3 \in \Cent_L(L_0)$, so that (modulo linear combinations of $y_1, y_2, y_3$, and up to scalar multiples) there is only one choice for $y$:
\[
y = h_3 + h_4 + X_{\alpha_3} + X_{\alpha_9} + X_{\alpha_{12}} + X_{-\alpha_3} + X_{-\alpha_5}.
\]
Because the characteristic polynomial of $\ad_y$ is equal to $x^{16} (x+1)^4 (x^2+x+1)^8$, 
we see that $y$ is not split, and that therefore $H'$ is not a split toral subalgebra: a contradiction.
\end{proof}

This demonstrates that $H$ is an example of a split toral subalgebra that is not contained in a split toral subalgebra that is maximal among all toral subalgebras.

\section{Regular semisimple elements}\label{sec_stsa_countingrss_new}
\newcommand{\Lrss}{L_{\mathrm{rss}}}
\newcommand{\Lrssw}{L_{\mathrm{rss},w}}
\newcommand{\Lrssid}{L_{\mathrm{rss},\mathrm{id}}}

In \cite{CM06} Cohen and Murray describe an algorithm for Lang's theorem, which needs an algorithm to find split maximal toral subalgebras of Lie algebras. Although they do not claim their algorithm is valid in the characteristic $2$ case, some propositions are. We shall first introduce the concept of regular semisimple elements in order to expose some of the difficulties in characteristic $2$.

An element $x$ of a Lie algebra $L$ is called \emph{regular semisimple} if its centralizer $\Cent_L(x)$ is a maximal toral subalgebra. We denote the set of regular semisimple elements of $L$ by $\Lrss$. Moreover,
if $L$ is the Lie algebra of a group of Lie type with root datum $R$ we let $\Lrssw$ be the set of elements $x \in \Lrss$ for which there exists a $g \in G$ such that $\Cent_L(x) = H_0^g$ and $g^F g^{-1} \in T_0 \dot{w}$, where $T_0$ is the standard split maximal torus, $H_0 = \Lie(T_0)$ the corresponding split maximal toral subalgebra, $F$ denotes the Frobenius automorphism of the field, and $\dot{w}$ denotes a lift of the Weyl group element $w$. In this section we are primarily interested in split toral subalgebras, hence in $\Lrssid$.

The time analysis in \cite{CM06} uses the fact that a significant fraction of the elements in the Lie algebra is regular semisimple. In the following proposition we show that this is not always true over fields of characteristic $2$.

\begin{proposition}\label{prop_noregularsemisimple_elts}
	Let $\F$ be a field of characteristic $2$, let $R$ be a root datum of type $\mathrm A_1\SC$, $\mathrm B_2\SC$, or $\mathrm C_n\SC$ (where $n \geq 3$), and let $L$ be the Lie algebra of type $R$ over $\F$. There exist no regular semisimple elements in $L$.
\end{proposition}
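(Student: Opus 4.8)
The plan is to realise each of the three Lie algebras uniformly as the symplectic Lie algebra $\mathfrak{sp}(V,\omega)=\mathfrak{sp}_{2n}(\F)$ acting on its natural module, and then to exploit the collapse $-\lambda=\lambda$ of eigenvalue pairs that is special to characteristic $2$. First I would reduce all three cases to one. Since $\mathrm C_1=\mathrm A_1$ and $\mathrm C_2=\mathrm B_2$ as root systems, and the simply connected group of each of these types is $\mathrm{Sp}_{2n}$, the simply connected Chevalley Lie algebra $L=L_\F(R)$ is in every case the integral symplectic Lie algebra reduced modulo the characteristic, i.e. $L\cong\mathfrak{sp}(V,\omega)$ where $V=\F^{2n}$ carries a non-degenerate alternating form $\omega$, and the three cases correspond to $n=1$, $n=2$, and $n\geq 3$. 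Here $\dim L=n+|\Phi|=n+2n^2=n(2n+1)=\dim\mathfrak{sp}_{2n}$, as it must.

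Now suppose, for contradiction, that $x\in L$ is regular semisimple, so that $T:=\Cent_L(x)$ is a maximal toral subalgebra and $x\in T$. Because the inclusion $L=\mathfrak{sp}(V)\hookrightarrow\mathfrak{gl}(V)$ is a homomorphism of restricted Lie algebras, the semisimple element $x$ is sent to a semisimple matrix, hence acts diagonalisably on $V\otimes\overline{\F}$; write $V=\bigoplus_\lambda V_\lambda$ for the decomposition into eigenspaces of $x$ over $\overline{\F}$.

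The heart of the argument is the following consequence of $\chr(\F)=2$. By $L$-invariance of $\omega$ we have $\omega(xv,w)+\omega(v,xw)=0$; taking $v\in V_\lambda$ and $w\in V_\mu$ yields $(\lambda+\mu)\,\omega(v,w)=0$, so $V_\lambda$ and $V_\mu$ are $\omega$-orthogonal whenever $\lambda+\mu\neq 0$. In characteristic $2$ we have $\lambda+\mu=0\iff\mu=\lambda$, so distinct eigenspaces are mutually orthogonal and $V=\bigoplus_\lambda V_\lambda$ is an orthogonal direct sum. Non-degeneracy of $\omega$ then forces each restriction $\omega|_{V_\lambda}$ to be non-degenerate; being alternating, it can be non-degenerate only on an even-dimensional space, so $\dim V_\lambda$ is even for every $\lambda$. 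I would then identify the centralizer: an element of $L$ commutes with the semisimple $x$ iff it preserves every $V_\lambda$, and since the $V_\lambda$ are mutually orthogonal with $\omega|_{V_\lambda}$ non-degenerate, such an element lies in $\mathfrak{sp}(V)$ iff it restricts to an element of $\mathfrak{sp}(V_\lambda)$ on each summand. Hence $\Cent_L(x)=\bigoplus_\lambda\mathfrak{sp}(V_\lambda)$, where every summand has $\dim V_\lambda\geq 2$ even. Each such summand is non-commutative — for instance $\mathfrak{sp}_2=\mathfrak{sl}_2$ is the Heisenberg algebra in characteristic $2$, with $[X_\alpha,X_{-\alpha}]\neq 0$ while $h$ is central — so $\Cent_L(x)$ is non-commutative. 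This contradicts the requirement that a toral subalgebra be commutative, and so $L$ has no regular semisimple elements.

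The main obstacle is the reduction step: one must verify that the simply connected Chevalley Lie algebra of type $\mathrm C_n$ really is $\mathfrak{sp}_{2n}$ \emph{as a restricted Lie algebra} in characteristic $2$, and in particular that the abstract notion of a semisimple element (via the $[p]$-operation) coincides with diagonalisability on the natural symplectic module — for it is precisely in small characteristic that such identifications are prone to fail. Once this identification is secured, the eigenvalue-collapse argument is uniform in $n$ and the remaining steps are routine linear algebra; note also that the conclusion uses only that $\Cent_L(x)$ is forced to be non-commutative, so no delicate bound on the dimension of a maximal toral subalgebra is needed.
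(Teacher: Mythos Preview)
Your argument is correct and takes a genuinely different route from the paper's. The paper invokes the structural classification of \cite[Proposition~3]{CR09}, which shows that for these root data in characteristic~$2$ some root spaces lie in the $0$-eigenspace of the standard split maximal toral subalgebra, so that $H\subsetneq\Cent_L(H)$ for every maximal toral subalgebra~$H$; then from $x\in\Cent_L(x)=H$ one gets $\Cent_L(x)\supseteq\Cent_L(H)\supsetneq H$, a contradiction. You instead realise $L$ concretely as $\mathfrak{sp}_{2n}$ on its natural module and use the eigenvalue collapse $\lambda=-\lambda$ to force the eigenspace decomposition of any semisimple $x$ to be $\omega$-orthogonal, whence $\Cent_L(x)\cong\bigoplus_\lambda\mathfrak{sp}(V_\lambda)$ with every summand of even positive dimension, hence non-commutative. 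Your route is self-contained and makes the characteristic-$2$ phenomenon visible as the disappearance of the usual $V_\lambda\leftrightarrow V_{-\lambda}$ pairing; the paper's route is shorter only because the root-space analysis has already been carried out in~\cite{CR09}. At bottom both proofs detect the same obstruction: the long-root pairs $X_\alpha,X_{-\alpha}$ --- your $\mathfrak{sp}_2$ blocks --- lie in $\Cent_L(H)$ but not in~$H$. The identification of $L_\F(\mathrm C_n\SC)$ with $\mathfrak{sp}_{2n}$ as a restricted Lie algebra, which you rightly flag, is standard (the $[p]$-map is the matrix $p$-th power inherited from $\mathfrak{gl}_{2n}$), so this is not a genuine gap.
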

\begin{proof}
	We refer to the analysis of these Lie algebras detailed in \cite[Proposition 3, Table 1]{CR09}, were it is shown that in the cases mentioned some of the root spaces are contained in the $0$-eigenspace of a split maximal toral subalgebra. This in particular implies that if $H$ is a split maximal toral subalgebra of $L$ then $H \subsetneq \Cent_L(H)$.	So suppose $x \in \Lrssid$, so that $\Cent_L(x) = H$, for some split maximal toral subalgebra $H$ of $L$. However, $x \in H$ since $x \in \Cent_L(x)$, so that $\Cent_L(x) \supseteq \Cent_L(H) \supsetneq H$, a contradiction.
\end{proof}

This shows that in some cases in characteristic $2$ there is a complete absence of regular semisimple elements.
A straightforward counting exercise \cite[Section 3.2]{roozemond10} shows that in other cases in characteristic $2$ regular semisimple elements are scarce as well, and even over small fields of odd characteristic the number of regular semisimple elements may be quite small. 

\section{The characteristic $3$ case}\label{sec_mstsa_algorithm3}
\begin{algorithm}[Split\-Max\-i\-mal\-To\-ral\-Sub\-al\-ge\-bra3]{SplitMaximalToralSubalgebra3}{Finding a split maximal toral subalgebra in char.~3}{smtsa3}
{\bf in:} \>\>\> A Lie algebra $L$ over a finite field $\F$ of characteristic $3$, \\
{\bf out:} \>\>\> A split maximal toral subalgebra $H$ of $L$.\\
\textbf{begin} \lnreset \\
\lnp \> \textbf{let} $M = L$, $H = 0$, $\phi = \mathrm{id}$, \\
\lnp \> \textbf{while} $M \neq 0$ \textbf{do}\\
     \> \> \emph{/* Base case */} \\
\lnp \> \> \textbf{if} $[M,M] = 0$ \textbf{and} $\phi(M)$ is split semisimple in $L$ \textbf{then} \\
\lnp \> \> \> \textbf{let} $H = H \cup \phi(M)$, \\
\lnp \> \> \> \textbf{return} $H$. \\
\lnp \> \> \textbf{end if} \\
     \> \> \emph{/* Try to find a new element of $H$ */} \\
\lnp \> \> \textbf{let} $\ad_M$ be the adjoint representation of $M$, \\
\lnp \> \> \textbf{let} $h$ be a random non-zero semisimple element of $M$, \\
\lnp \> \> \textbf{for each} pair of eigenvalues $(v,-v)$ of $\ad_M(h)$ \textbf{do} \\
\lnp \> \> \> \textbf{let} $s^+ \in M$ be a random element of the $v$-eigenspace of $\ad_M(h)$, \\
\lnp \> \> \> \textbf{let} $s^- \in M$ be a random element of the $-v$-eigenspace of $\ad_M(h)$, \\
\lnp \> \> \> \textbf{let} $h' = [s^+, s^-]$, \\
\lnp \> \> \> \textbf{if} $h'_L = \phi(h')$ is a split semisimple element of $L$ \textbf{then} \\
\lnp \> \> \> \> \textbf{let} $H = H \cup h'_L$, \\
\lnp \> \> \> \> \textbf{let} $M, \phi_M = \Cent_M(h')/\langle h' \rangle_M$, \\
\lnp \> \> \> \> \textbf{let} $\phi: M \rightarrow L, \phi_M \circ \phi$ the new pullback of $M$ to $L$, \\
\lnp \> \> \> \> \textbf{break for}. \\
\lnp \> \> \> \textbf{end if}. \\
\lnp \> \> \textbf{end for}. \\
\lnp \> \textbf{end while}, \\
\lnp \> \textbf{return} $H$. \\
\textbf{end}
\end{algorithm}

We first remark that the troublesome characteristic $3$ cases that Ryba \cite{Ryba07} excludes are precisely those cases discussed in \cite[Section 2]{CR09}: the Lie algebras whose root datum is either $\mathrm A_2\SC$ or $G_2$. 
The problems that arise here may be remedied relatively easily, following the observation that even though some root spaces have dimension $3$ (rather than the more desirable dimension $1$), the product of random elements of two opposite $3$-dimensional eigenspaces is often a split semisimple element. 

The algorithm is made explicit as Algorithm \ref{alg_smtsa3}: we recursively find a semisimple element $h$, find two opposite eigenspaces $S^+$ and $S^-$ of $h$, and consider random elements $s^+ \in S^+$ and $s^- \in S^-$. If $h' = [s^+, s^-]$ pulls back to a split semisimple element of $L$, we add $h'$ to the subalgebra $H$ being constructed, and continue in $\Cent_M(h')/\langle h' \rangle_M$. Throughout the algorithm, $H$ is a split toral subalgebra of $L$, $M$ is a Lie algebra being searched for new split semisimple elements, and $\phi$ is the pullback map from $M$ to $L$. Note that the image of $\phi$ is not uniquely determined, but it is determined up to addition with elements of $H$.

In Section \ref{sec_stsa_notesimpl} we present timings for Algorithm \ref{alg_smtsa3} applied to Chevalley Lie algebras in characteristic $3$. We remark that in our experience this algorithm is applicable to and yields correct results for all Chevalley Lie algebras over finite fields of any odd characteristic.

\section{The characteristic $2$ case}\label{sec_mstsa_algorithm2}

\begin{algorithm}[Split\-Max\-i\-mal\-To\-ral\-Sub\-al\-ge\-bra2]{SplitMaximalToralSubalgebra2}{Finding a split maximal toral subalgebra in char.~2}{smtsa2}
{\bf in:} \>\>\> A Lie algebra $L$ over a finite field $\F$ of characteristic $2$, \\
{\bf out:} \>\>\> A split maximal toral subalgebra $H$ of $L$.\\
\textbf{begin} \lnreset \\
\lnp \> \textbf{let} $d = \dim(\Cent_L(\hat{H}))$ where $\hat{H}$ is some Cartan subalgebra of $L$, \\
\lnp \> \textbf{let} $M = L$, $H = 0$, $\phi = \mathrm{id}$, \\
\lnp \> \textbf{while} $M \neq 0$ \textbf{and} $\dim(H) < d$ \textbf{do}\\
\lnp \> \> \textbf{if} $\dim(\Center(M)) > 0$ \textbf{then} \\
     \> \> \> \emph{/* Take out the center */} \\
\lnp \> \> \> \textbf{if} $\phi(\Center(M))$ is split semisimple \textbf{then let} $H = H \cup \phi(\Center(M))$. \\
\lnp \> \> \> \textbf{let} $M, \phi_M = M/\Center(M)$, \\
\lnp \> \> \> \textbf{let} $\phi: M \rightarrow L, \phi_M \circ \phi$. \\
\lnp \> \> \textbf{else} \\
     \> \> \> \emph{/* Try to find a new element of $H$ */} \\
\lnp \> \> \> \textbf{let} $h$ be a random non-zero semisimple element of $M$, \\
\lnp \> \> \> \textbf{if} $\phi(h)$ is split semisimple in $L$ \textbf{then} \\
\lnp \> \> \> \> \textbf{let} $h' = h$. \\
\lnp \> \> \> \textbf{else} \\
     \> \> \> \> \emph{/* Use this $h$ as input for {\sc FindSplitSemisimpleElt} */} \\
\lnp \> \> \> \> \textbf{for each} eigenvalue $v$ of $h$ \textbf{do} \\
\lnp \> \> \> \> \> \textbf{let} $V$ be the $v$-eigenspace of $h$, \\
\lnp \> \> \> \> \> \textbf{let} $h' = $\ {\sc FindSplitSemisimpleElt}($V$, $M$, $L$, $\phi$), \\
\lnp \> \> \> \> \> \textbf{if} $h' \neq $\ \textbf{fail} \textbf{then} \textbf{break}. \\
\lnp \> \> \> \> \textbf{end for}, \\
\lnp \> \> \> \textbf{end if}, \\
\lnp \> \> \> \textbf{if} $h' \neq $\ \textbf{fail} \textbf{then} \\
\lnp \> \> \> \> \textbf{let} $H = H \cup h'$, \\
\lnp \> \> \> \> \textbf{let} $M, \phi_M = \Cent_M(h')/\langle h'\rangle_M$, \\
\lnp \> \> \> \> \textbf{let} $\phi: M \rightarrow L, \phi_M \circ \phi$. \\
\lnp \> \> \> \textbf{end if}. \\
\lnp \> \> \textbf{end if}. \\
\lnp \> \textbf{end while}. \\
\textbf{end}
\end{algorithm}

\begin{algorithm}{FindSplitSemisimpleElt}{Finding a split semisimple element in an eigenspace}{smtsa_fssse}
{\bf in:} \>\>\> An eigenspace $V$ of a semisimple element of the Lie algebra $M \subseteq L$, \\
          \>\>\> and the natural pullback map $\phi: M \rightarrow L$. \\
{\bf out:} \>\>\> A split semisimple element $h \in M$, or \textbf{fail}.\\
\textbf{begin} \lnreset \\
\lnp \> \textbf{let} $S = \langle V \rangle_M$ be the subalgebra of $M$ generated by $V$, \\
\lnp \> \textbf{let} $I = ( V )_M$ be the ideal of $M$ generated by $V$, \\
\lnp \> \textbf{if} $\dim([S,S])=1$ \textbf{then} \\
     \> \> \emph{/* Case (A) */} \\
\lnp \> \> \textbf{let} $h' \in [S,S]$ be such that $[S,S] = \langle h' \rangle_\F$. \\
\lnp \> \textbf{else if} $[I,I] = I$ \textbf{and} $\dim([S,S]) \in \{2,3\}$ \textbf{then} \\
     \> \> \emph{/* Case (B) */} \\
\lnp \> \> \textbf{let} $h'$ be a random non-zero element of $[S,S]$. \\
\lnp \> \textbf{else if} $\dim(I) \neq 0$ \textbf{and} $\dim(I)$ is even \textbf{and} $\dim([I,I]) = 0 $ \\
\>\>\textbf{and} $\dim([S,S]) = 0$ \textbf{then} \\
    \> \> \emph{/* Case (C) */} \\
\lnp \> \> \textbf{find} an $h' \in M$ such that $[h',e] = e$ for all $e \in I$. \\
\lnp \> \textbf{else if} $\dim(S) = 6$ \textbf{and} $[I,I] = S$ \textbf{and} $\dim([S,S]) = 2$ \textbf{then} \\
	 \> \> \emph{/* Case (D) */} \\
\lnp \> \> \textbf{let} $h'$ be a random non-zero element of $[S,S]$. \\
\lnp \> \textbf{else if} $\dim(I) \neq 0$ \textbf{and} $\dim(I)$ is even \textbf{and} $\dim([I,I]) \neq 0 $ \\
\>\>\textbf{and} $\dim([S,S]) = 0$ \textbf{then} \\
    \> \> \emph{/* Case (E) */} \\
\lnp \> \> \textbf{find} an $h' \in I$ such that $[h',e] = e$ for all $e \in S$. \\
\lnp \> \textbf{else if} $\dim(V)$ is even \textbf{and} $\dim([S,S]) \neq 0$ \textbf{then} \\
     \> \> \emph{/* Case (F) */} \\
\lnp \> \> \textbf{let} $h'$ be a random non-zero element of $[S,S]$ \\
\lnp \> \textbf{end if}, \\
\lnp \> \textbf{if} $h'$ is defined and $\phi(h')$ is a split semisimple element of $L$ \textbf{then} \\
\lnp \> \> \textbf{return} $h'$.\\
\lnp \> \textbf{else} \\
\lnp \> \> \textbf{return} \textbf{fail}.\\
\lnp \> \textbf{end if}. \\
\textbf{end}
\end{algorithm}

Proposition \ref{prop_noregularsemisimple_elts} indicates that the approach for finding
split maximal toral subalgebras described by Cohen and Murray \cite[Section 5]{CM06} will not in general work in the 
cases covered by the proposition: there simply do not exist any regular semisimple elements in the Lie algebra.
Moreover, their algorithm relies on the fact that root spaces are $1$-dimensional,
something that is not true over characteristic $2$ \cite[Proposition 3]{CR09}.

Ryba explicitly notes \cite[Section 9]{Ryba07} that the algorithm he describes
is not easily extended to work over fields of characteristic $2$,
largely because of similar problems.
Finally, the counterexample presented in Section \ref{sec_scsa_C4} suggests that 
algorithms for finding split maximal toral subalgebras run the risk of 
descending into a split toral subalgebra that is not in a split maximal toral subalgebra.

\bigskip

In this section we describe a heuristic Las Vegas type algorithm for finding split maximal toral subalgebras in characteristic $2$.
Unfortunately, we have no bound on the probability that it completes successfully,
and therefore no estimate of the runtime. However, we do provide the intuition behind the
design of the algorithm (in the remainder of this section) and we show that the implementation
is successful (we give timings in Section \ref{sec_stsa_notesimpl}).

For the remainder of this section we let $L$ be the Lie algebra of a split simple algebraic group defined over a finite field $\F$ of characteristic $2$,
and we assume $L$ to be given as a structure constant algebra. 
The goal of the algorithm described is to find a split maximal toral subalgebra $H$ of $L$.

The general principle is given in Algorithm \ref{alg_smtsa2}. This algorithm repeatedly tries to find a split semisimple element $h' \in M$ (initially $M = L$), and then recursively continues the search in $\Cent_M(h')/\langle h' \rangle_M$. It attempts to find such split semisimple elements by taking a random non-zero semisimple element $h$, and producing a random split semisimple element using suitable eigenspaces of $h$. The latter process is described in Algorithm \ref{alg_smtsa_fssse}.

\begin{table}
	\[
	\renewcommand\arraystretch{1.3}%
	\begin{array}{ll|lllll|l}
		R & \mbox{Mult} & S & [S,S] & [S,S] \cap H & I & [I,I] & \mbox{Soln}\\
		\hline
		\hline
		\mathrm A_1\Ad & 2 & 2 & 0 & 0 & 2 & 2 & \mbox{(C)} \cr
		\mathrm A_1\SC & \mathbf{2} & 3 & 1 & 1 & 3 & 1 & \mbox{(A)}\cr
		\mathrm A_3\SC & 4^3 & 6 & 2 & 2 & L & I & \mbox{(B)}\cr
		\mathrm A_3^{(2)} & 4^3 & 5 & 1 & 1 & L-1 & I & \mbox{(A)}\cr
		\mathrm B_2\Ad & 2^2 & 2 & 0 & 0 & 4 & 0 & \mbox{(C)} \cr
		       \;\lfloor & 4 & 5 & 1 & 1 & 9 & 5 & \mbox{(A)}\cr
		\mathrm B_n\Ad\; (n \geq 3) & 2^n & 2 & 0 & 0 & 2n & 0 & \mbox{(C)} \cr
		       \;\lfloor          & 4^{n \choose 2} & 5 & 1 & 1 & L-1 & I & \mbox{(A)}\cr
		\mathrm B_2\SC & \mathbf{4} & 6 & 2 & 2 & L & 6 & \mbox{(D)}\cr
		     \;\lfloor & 4 & 5 & 1 & 1 & 5 & 1 & \mbox{(A)}\cr
		\mathrm B_3\SC & 6^3 & 8 & 2 & 2 & L & I & \mbox{(B)}\cr
		\mathrm B_4\SC & 2^4 & 3 & 1 & 1 & 9 & 1 & \mbox{(A)}\cr
		     \;\lfloor & 8^3 & 11 & 3 & 3 & L & I & \mbox{(B)}\cr
		\mathrm B_n\SC\; (n \geq 5) & 2^n & 3 & 1 & 1 & 2n+1 & 1 & \mbox{(A)}\cr
		     \;\lfloor & 4^{n \choose 2} & 6 & 2 & 2 & L & I & \mbox{(B)}\cr
		\mathrm C_n\Ad\; (n \geq 3) & 2n & 3n-1 & n-1 & n-1 & L & & \mbox{(F)} \cr
		     \;\lfloor            & 2^{n(n-1)} & 3 & 1 & 1 &  & I & \mbox{(A)}\cr
		\mathrm C_n\SC\; (n \geq 3) & \mathbf{2n} & 3n & n & n & L &  & \mbox{(F)}\cr
		     \;\lfloor            & 4^{n \choose 2} & 5 & 1 & 1 &  & I & \mbox{(A)}\cr
		\mathrm D_4\SC & 8^3 & 11 & 3 & 3 & L & I & \mbox{(B)}\cr
		\mathrm D_4^{(1),(n),(n-1)} & 4^6 & 5 & 1 & 1 & L-1 & I & \mbox{(A)}\cr
		\mathrm D_n\SC\; (n \geq 5) & 4^{n \choose 2} & 6 & 2 & 2 & L & I & \mbox{(B)}\cr
		\mathrm D_n^{(1)}\; (n \geq 5) & 4^{n \choose 2} & 5 & 1 & 1 & L-1 & I & \mbox{(A)}\cr
		\mathrm F_4 & 2^{12} & 3 & 1 & 1 & 26 & I & \mbox{(A)}\cr
		    \;\lfloor & 8^3 & 11 & 3 & 3 & L & I & \mbox{(B)}\cr
		\mathrm G_2 & 4^3 & 5 & 1 & 1 & L & I & \mbox{(A)} 
	\end{array}
	\]
	\caption{Eigenspaces, their subalgebras, and their ideals in characteristic $2$}
	\label{tab_stsa_eigenspaces}
\end{table}

\bigskip

In order to clarify Algorithm \ref{alg_smtsa_fssse} we let $R$ be an irreducible root datum, $\F$ a field of characteristic $2$, and $L$ the Lie algebra of type $R=(X,\Phi,Y,\Phi\ch)$ over $\F$. Recall the definition of root spaces $L_\alpha$ from Section \ref{sec_introduction}.
Observe first of all that, since $\chr(\F) = 2$, the root spaces $L_{\alpha}$ and $L_{-\alpha}$ coincide for all $\alpha \in \Phi$. This implies that $\alpha\ch \in [L_\alpha, L_\alpha]$, prompting us to consider $[S,S]$ in line 4 of Algorithm \ref{alg_smtsa_fssse}.

We justify the choices for the various other cases in this algorithm using the data in Table \ref{tab_stsa_eigenspaces}. 
In the first column that table contains the root data $R$ that are proved to have multidimensional root spaces over fields of characteristic $2$ (cf.~\cite[Proposition 3]{CR09}). The hook symbols in the first column indicate a case spread over multiple rows, e.g., the 5th and 6th row both concern the case $\mathrm B_2\Ad$.
The second column labeled ``$\mbox{Mult}$'' contains the dimensions and multiplicities of each of these root spaces
in the same notation used in \cite{CR09}, i.e., $d^k$ signifies $k$ distinct eigenspaces of dimension $d$ and such a $\mathbf{d^k}$ printed in boldface indicates an eigenspace that occurs with eigenvalue $0$.

To clarify the other columns we let $V$ be one of the eigenspaces mentioned (e.g., for the eighth line of the table $L = \mathrm B_n\Ad(\F)$ and $V$ is one of the $4$-dimensional (long) root spaces). Then we let $S = \langle V \rangle_L$ be the subalgebra generated by $V$ and $I = ( V )_L$ the ideal generated by $V$.
Now the third column contains the dimension of $S$, the fourth column the dimension of $[S,S]$ and the fifth the dimension of $[S,S] \cap H$. 
The sixth column contains the dimension of $I$, or ``$L$'' if $I = L$, or ``$L-1$'' if $I$ is a codimension one ideal of $L$, and the seventh column contains the dimension of $[I,I]$, or ``$I$'' if $[I,I]=I$.
Finally, the eighth column shows which of the cases of Algorithm \ref{alg_smtsa_fssse} is based on this type of root space.

The case distinction in Algorithm \ref{alg_smtsa_fssse} is based on the observations in 
Table \ref{tab_stsa_eigenspaces} in the following manner.
\begin{enumerate}
	\renewcommand{\theenumi}{(\Alph{enumi})}
	\item In each of the cases where $\dim([S,S]) = 1$ we have $[S,S] \subseteq H$, prompting us to take $h$ to be a basis element of $[S,S]$. Note that this case also applies if $V$ corresponds to the direct sum of several Lie algebras of type $\mathrm A_1\SC$.
	\item In the cases where $[I,I]=I$ and $\dim([S,S]) \in \{2,3\}$ we also have $[S,S] \subseteq H$, so that a random non-zero element of $[S,S]$ seems a good candidate.
	\item In the cases where $\dim([I,I]) = \dim([S,S]) = 0$ the best candidate we can find is an element $h \in M$ that acts on $I$ the way a split semisimple element should. Note that this case also applies if $V$ corresponds to the direct sum of several Lie algebras of type $\mathrm A_1\Ad$.
	\item In the cases where $\dim(S)=6$ (prime example being the long roots in $\mathrm B_2\SC$) we also pick a random non-zero element of $[S,S]$ as candidate.
	\item This case is special since it does not occur in Table \ref{tab_stsa_eigenspaces}. It is however needed to successfully complete the search for a split maximal toral subalgebra if $L$ is of type $\mathrm C_n\SC$. The solution is similar to that of case (C).
	\item This case is needed for Lie algebras of type $\mathrm C_n$, where again $[S,S] \subseteq H$, but the dimension of $[S,S]$ can be as large as $\dim(H)$. Again, we pick a random non-zero element of $[S,S]$ as candidate.
\end{enumerate}

Given a reductive Lie algebra $L$, the \emph{reductive rank} of $L$ is the rank of the root datum of $L$, or, equivalently, the dimension of its split maximal toral subalgebra. In the first line of Algorithm \ref{alg_smtsa2} we define $d$ to be $\dim(\Cent_L(\hat{H}))$ for some Cartan subalgebra $\hat{H}$. This integer is the dimension of $H$ we are aiming for throughout the algorithm, in effect claiming that the reductive rank of $L$ must be $d$. The validity of this claim is asserted by the following lemma.
\begin{lemma}
Let $L$ be a reductive Lie algebra over a field $\F$ whose root datum $R = (X,\Phi,Y,\Phi\ch)$ is irreducible, and let $d = \rk(R)$ be its reductive rank.
If $\hat{H}$ is a (not necessarily split) Cartan subalgebra of $L$ then $\dim(\Cent_L(\hat{H})) = d$.
\end{lemma}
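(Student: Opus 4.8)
The plan is to prove that $\Cent_L(\hat{H})$ is precisely the unique maximal toral subalgebra contained in $\hat{H}$, and that this subalgebra has dimension $\rk(R) = d$. The first step is to observe that $\Cent_L(\hat{H}) \subseteq \hat{H}$. A Cartan subalgebra equals its own Fitting null component $L^0(\hat{H})$, the set of $x \in L$ annihilated by some power of $\ad h$ for every $h \in \hat{H}$; since anything centralizing $\hat{H}$ certainly lies in $L^0(\hat{H})$, we get $\Cent_L(\hat{H}) = \Center(\hat{H})$, the center of $\hat{H}$. So the entire lemma reduces to computing this center.

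Next I would locate a maximal toral subalgebra inside $\hat{H}$. As $L$ is restricted, $\hat{H} = L^0(\hat{H})$ is a restricted nilpotent subalgebra, hence it has a unique maximal torus $T$, and because the semisimple elements of a nilpotent restricted Lie algebra all lie in $T$ and are central, we have $T \subseteq \Center(\hat{H})$. I would then show $\hat{H} = \Cent_L(T)$ directly: writing any $h \in \hat{H}$ as $h = h_s + h_n$ with $h_s \in T$ and $h_n$ nilpotent (hence $\ad$-nilpotent on $L$) and noting $[h_s, h_n] = 0$, a short Jacobi computation gives that $\ad h$ acts nilpotently on $\Cent_L(T)$, so $\Cent_L(T) \subseteq L^0(\hat{H}) = \hat{H}$; the reverse inclusion is immediate from $T \subseteq \Center(\hat{H})$. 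Consequently $T$ is maximal toral in $L$ (any toral subalgebra containing $T$ lies in $\Cent_L(T) = \hat{H}$, hence in its maximal torus $T$), and after base change to $\bar{\F}$ — which preserves the Cartan property, toral subalgebras, centralizer dimensions, and $\rk(R)$ — we find $\dim T = \rk(R) = d$.

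It remains to prove the reverse inclusion $\Center(\hat{H}) \subseteq T$. Since $\hat{H}$ is restricted, $\Center(\hat{H})$ is closed under the Jordan decomposition, so for $z \in \Center(\hat{H})$ both $z_s$ and $z_n$ centralize $\hat{H}$; as $z_s$ is a semisimple element of $\hat{H}$ it lies in $T$, and everything comes down to showing there is no nonzero nilpotent central element, i.e.\ that $\Center(\hat{H})$ is toral. Using the weight-space decomposition of $L$ for $T$, whose zero weight space is exactly $\hat{H} = \Cent_L(T) = T \oplus \bigoplus_\alpha L_\alpha$ (sum over roots $\alpha$ whose differential on $T$ vanishes), I would write a central element as $z = t + \sum_\alpha c_\alpha X_\alpha$; the relations $[z, X_{-\alpha}] = 0$ together with $[X_\alpha, X_{-\alpha}] = \pm\alpha^\vee$ force $c_\alpha \alpha^\vee = 0$ in $T$, killing every $c_\alpha$ for which the coroot $\alpha^\vee$ is nonzero modulo $\chr(\F)$.

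The hard part — and the only place genuinely special to characteristic $2$ and $3$ — is the residual situation in which a root $\alpha$ and its coroot $\alpha^\vee$ both degenerate modulo $\chr(\F)$, so that the identity above imposes no constraint on $c_\alpha$. Here I would invoke the explicit determination of the zero weight space and of the brackets among its root vectors in \cite[Proposition 3, Table 1]{CR09}, verifying case by case that such an $X_\alpha$ still fails to centralize all of $\hat{H}$ because the relevant structure constants among the degenerate root vectors are nonzero. This rules out nilpotent central elements, gives $\Center(\hat{H}) = T$, and hence $\dim(\Cent_L(\hat{H})) = \dim T = d$, completing the proof.
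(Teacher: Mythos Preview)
Your argument is correct and follows essentially the same route as the paper's: both identify $\hat H$ with $\Cent_L(H)$ for a maximal toral $H$ and then show $\Cent_L(\Cent_L(H))=H$ by expanding a putative element in a Chevalley basis (over a splitting field) and using $[X_\alpha,X_{-\alpha}]=-\alpha^\vee$ to force the root-vector coefficients to vanish. The paper is slightly more economical in two places: it cites \cite[Proposition~15.2]{Hum67} for $\hat H=\Cent_L(H)$ rather than reproving it via the Jordan decomposition as you do, and it invokes \cite[Proposition~3]{CR09} at the outset to see that $H\subsetneq\Cent_L(H)$ forces $R=\mathrm C_n\SC$ in characteristic~$2$, after which one checks directly that $[X_\alpha,X_{-\alpha}]\neq 0$ (equivalently $\alpha^\vee\neq 0$ in $Y\otimes\F$) for every long root---so the residual ``hard part'' you anticipate, where both $\alpha$ and $\alpha^\vee$ degenerate, never actually occurs, and your coroot argument already finishes the proof.
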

\begin{proof}
By \cite[Proposition 15.2]{Hum67} we have that $\hat{H} = \Cent_L(H)$ for some maximal toral subalgebra $H$ of $L$. 
Since $H$ is a maximal toral subalgebra, we have $[H,H] = 0$ so that $H \subseteq \Cent_L(H)$.
If $H = \Cent_L(H)$, then
\[
	\dim(\Cent_L(\hat{H})) = \dim(\Cent_L(\Cent_L(H))) = \dim(H) = d,
\]
so the only case left to consider is when $H \subsetneq \Cent_L(H)$.

For a suitable field extension, $\F' \supseteq \F$ say, $H_{\F'} = H \otimes \F'$ is split, so $H_{\F'}$ diagonalises $L_{\F'} = L \otimes \F'$ into root spaces $L_\alpha$. Since we assumed $H \subsetneq \Cent_L(H)$, there exists an $L_\alpha$ such that $[L_\alpha, H_{\F'}] = 0$, so there is a root whose eigenvalue under $H_{\F'}$ is $0$. By \cite[Proposition 3]{CR09} that means $R$ is of type $\mathrm C_n\SC$, where $n \geq 2$ (note that this includes $\mathrm B_2\SC$).

So assume $L$ is of type $\mathrm C_n\SC$, with $n \geq 2$, with a Chevalley basis $\{ X_\alpha \mid \alpha \in \Phi\} \cup \{ h_i \mid i =1, \ldots, n \}$, where $h_i \in H$, and let $S = \Cent_L(H)$. Inspection of this family of Lie algebras shows that $\dim(S) = 3n$ and $S = H \cup \langle X_\alpha \mid \alpha \in \Phi^{\mathrm{long}} \rangle$. Moreover, for all $\alpha \in \Phi^{\mathrm{long}}$ we have $[X_\alpha, X_{-\alpha}] \neq 0$ and, as is always the case, $[X_\alpha, X_{-\alpha}] \in H$.

Now suppose $x \in \Cent_L(S)$. Then, since $x \in L$, we have
\[
x = \sum_{\alpha \in \Phi} c_\alpha X_\alpha + \sum_{i=1}^n t_i h_i,
\]
for some $c_\alpha$, $t_i$. If $c_\alpha \neq 0$ for some $\alpha \in \Phi$, then $[x,H] \neq 0$ if $\alpha$ is short, and $[x, X_{-\alpha}] \neq 0$ if $\alpha$ is long (since for all $\beta,\gamma \in \Phi^{\mathrm{long}}$ we have $[X_\beta, H] = 0$, and $[X_\beta,X_\gamma] = 0$ unless $\beta=-\gamma$), in both cases contradicting $[x,S] = 0$. This shows that $x \in H$, and therefore $\Cent_L(S) = H$ and $\dim(\Cent_L(\hat{H})) = \dim(H) = d$, proving the lemma.
\end{proof}
The lemma immediately generalises to the case where $R$ is not irreducible.
We note that a Cartan subalgebra can be computed efficiently \cite[Section 3.2]{deGraaf00}. In our experience, in fact, the time it takes to compute a Cartan subalgebra is negligible compared to the overall time taken by Algorithm \ref{alg_smtsa2}.

We end this section with a number of remarks on the implementation of the algorithm.
Firstly, from the manner in which the algorithm is specified we can conclude that it may run for an infinite time. Indeed, $M$ only decreases in dimension if a new split semisimple element is found and such an element does not always exist, as shown in Section \ref{sec_scsa_C4}.
Also, in various cases the algorithm {\sc FindSplitSemisimpleElt} will fail to return a split semisimple $h$, due to the simple fact that $S$ is not of a suitable type or the candidate $h$ turns out not to be split.
In the implementation of this algorithm these problems are remedied by limiting the number of random tries allowed for each $M$ in line $9$ of {\sc SplitMaximalToralSubalgebra} to some finite number. If after that number of tries no new $H$ was found, the algorithm terminates and returns \textbf{fail}. 

Secondly, note that the influence of the size of the field on the performance of the algorithm is twofold. Firstly, the smaller the field, the higher the probability of finding split semisimple elements in Algorithm \ref{alg_smtsa_fssse}. On the other hand, the bigger the field, the higher the probability that the random semisimple elements picked in Algorithm \ref{alg_smtsa2} have eigenspaces of small dimension. This dichotomy yields an algorithm whose performance is acceptable both over small and over larger fields.
We will, of course, in general see a decreasing performance of the algorithm as the size of $\F$ increases, simply because field arithmetic and therefore Lie algebra arithmetic slow down.

\section{Implementation and performance}\label{sec_stsa_notesimpl}

\begin{table}
\ \\

\vspace{-10mm} \hspace{-5mm}
\begin{tabular}{p{65mm}p{65mm}}	
\[ \begin{array}{l...} 
R & \multicolumn{1}{c}{\GF(3)} & \multicolumn{1}{c}{\GF(3^6)} & \multicolumn{1}{c}{\GF(3^{10})} \cr
\hline
\hline
\mathrm A_{1}^{\mathrm{ad}} & 0 & 0 & 0 \cr
\mathrm A_{1}^{\mathrm{SC}} & 0 & 0 & 0 \cr
\mathrm A_{2}^{\mathrm{ad}} & 0 & 0 & 0 \cr
\mathrm A_{2}^{\mathrm{SC}} & 0 & 0 & 0 \cr
\mathrm A_{3}^{\mathrm{ad}} & 0 & 0 & 0 \cr
\mathrm A_{3}^{(2)}         & 0 & 0 & 0 \cr
\mathrm A_{3}^{\mathrm{SC}} & 0 & 0 & 0 \cr
\mathrm A_{4}^{\mathrm{ad}} & 0.1 & 0.1 & 0.1 \cr
\mathrm A_{4}^{\mathrm{SC}} & 0.1 & 0.1 & 0.1 \cr
\mathrm A_{5}^{\mathrm{ad}} & 0.2 & 0.5 & 0.5 \cr
\mathrm A_{5}^{(2)}         & 0.3 & 0.5 & 0.6 \cr
\mathrm A_{5}^{(3)}         & 0.2 & 0.4 & 0.4 \cr
\mathrm A_{5}^{\mathrm{SC}} & 0.2 & 0.4 & 0.3 \cr
\mathrm A_{6}^{\mathrm{ad}} & 0.3 & 1.1 & 1.2 \cr
\mathrm A_{6}^{\mathrm{SC}} & 0.6 & 1.1 & 1.1 \cr
\mathrm A_{7}^{\mathrm{ad}} & 1.3 & 3.3 & 3.3 \cr
\mathrm A_{7}^{(2)}         & 1.7 & 3 & 4.1 \cr
\mathrm A_{7}^{(4)}         & 1.2 & 3.1 & 4.3 \cr
\mathrm A_{7}^{\mathrm{SC}} & 1.7 & 3.4 & 4.9 \cr
\mathrm A_{8}^{\mathrm{ad}} & 5.2 & 21 & 17 \cr
\mathrm A_{8}^{(3)}         & 5.6 & 17 & 21 \cr
\mathrm A_{8}^{\mathrm{SC}} & 3 & 14 & 10 \cr
\mathrm B_{2}^{\mathrm{ad}} & 0 & 0 & 0 \cr
\mathrm B_{2}^{\mathrm{SC}} & 0 & 0 & 0 \cr
\mathrm B_{3}^{\mathrm{ad}} & 0 & 0 & 0 \cr
\mathrm B_{3}^{\mathrm{SC}} & 0 & 0 & 0 \cr
\mathrm B_{4}^{\mathrm{ad}} & 0.1 & 0.2 & 0.3 \cr
\mathrm B_{4}^{\mathrm{SC}} & 0.1 & 0.2 & 0.3 \cr
\mathrm B_{5}^{\mathrm{ad}} & 0.7 & 1.5 & 1.8 \cr
\mathrm B_{5}^{\mathrm{SC}} & 0.5 & 1.3 & 1.4 \cr
\mathrm B_{6}^{\mathrm{ad}} & 2.6 & 8.1 & 8.2 \cr
\mathrm B_{6}^{\mathrm{SC}} & 2.4 & 6.2 & 6.9 \cr
\mathrm B_{7}^{\mathrm{ad}} & 7.3 & 23 & 25 \cr
\mathrm B_{7}^{\mathrm{SC}} & 7.9 & 23 & 29 \cr
\mathrm B_{8}^{\mathrm{ad}} & 37 & 74 & 102 \cr
\mathrm B_{8}^{\mathrm{SC}} & 25 & 70 & 85 \cr
\mathrm C_{3}^{\mathrm{ad}} & 0 & 0 & 0 \cr
\mathrm C_{3}^{\mathrm{SC}} & 0 & 0 & 0 
\end{array} \] &
\[ \begin{array}{l...} 
R & \multicolumn{1}{c}{\GF(3)} & \multicolumn{1}{c}{\GF(3^6)} & \multicolumn{1}{c}{\GF(3^{10})} \cr
\hline
\hline
\mathrm C_{4}^{\mathrm{ad}} & 0.1 & 0.2 & 0.3 \cr
\mathrm C_{4}^{\mathrm{SC}} & 0.1 & 0.2 & 0.3 \cr
\mathrm C_{5}^{\mathrm{ad}} & 0.6 & 1.4 & 1.7 \cr
\mathrm C_{5}^{\mathrm{SC}} & 0.6 & 1.6 & 1.5 \cr
\mathrm C_{6}^{\mathrm{ad}} & 2.6 & 7.5 & 7.9 \cr
\mathrm C_{6}^{\mathrm{SC}} & 3 & 6.5 & 8 \cr
\mathrm C_{7}^{\mathrm{ad}} & 7.3 & 26 & 30 \cr
\mathrm C_{7}^{\mathrm{SC}} & 7 & 21 & 26 \cr
\mathrm C_{8}^{\mathrm{ad}} & 29 & 73 & 109 \cr
\mathrm C_{8}^{\mathrm{SC}} & 22 & 86 & 110 \cr
\mathrm D_{4}^{\mathrm{ad}} & 0.1 & 0.1 & 0.1 \cr
\mathrm D_{4}^{(1)}         & 0 & 0.1 & 0.1 \cr
\mathrm D_{4}^{(n-1)}       & 0.1 & 0.1 & 0.1 \cr
\mathrm D_{4}^{(n)}         & 0.1 & 0.1 & 0.1 \cr
\mathrm D_{4}^{\mathrm{SC}} & 0.1 & 0.2 & 0.1 \cr
\mathrm D_{5}^{\mathrm{ad}} & 0.3 & 0.8 & 0.9 \cr
\mathrm D_{5}^{(1)}         & 0.3 & 0.7 & 0.8 \cr
\mathrm D_{5}^{\mathrm{SC}} & 0.4 & 0.6 & 0.7 \cr
\mathrm D_{6}^{\mathrm{ad}} & 1.5 & 4.9 & 5 \cr
\mathrm D_{6}^{(1)}         & 1.3 & 3.4 & 3.7 \cr
\mathrm D_{6}^{(n-1)}       & 1.6 & 3.9 & 3.9 \cr
\mathrm D_{6}^{(n)}         & 1.2 & 3.7 & 3.7 \cr
\mathrm D_{6}^{\mathrm{SC}} & 1.1 & 4.6 & 4.4 \cr
\mathrm D_{7}^{\mathrm{ad}} & 3.5 & 16 & 19 \cr
\mathrm D_{7}^{(1)}         & 6 & 17 & 17 \cr
\mathrm D_{7}^{\mathrm{SC}} & 6.7 & 19 & 21 \cr
\mathrm D_{8}^{\mathrm{ad}} & 12 & 54 & 71 \cr
\mathrm D_{8}^{(1)}         & 14 & 48 & 64 \cr
\mathrm D_{8}^{(n-1)}       & 10 & 53 & 64 \cr
\mathrm D_{8}^{(n)}         & 12 & 49 & 64 \cr
\mathrm D_{8}^{\mathrm{SC}} & 12 & 73 & 64 \cr
\mathrm E_{6}^{\mathrm{ad}} & 3 & 12 & 16 \cr
\mathrm E_{6}^{\mathrm{SC}} & 2.3 & 5.9 & 8 \cr
\mathrm E_{7}^{\mathrm{ad}} & 14 & 66 & 70 \cr
\mathrm E_{7}^{\mathrm{SC}} & 13 & 88 & 72 \cr
\mathrm E_{8} & 132 & 1269 & 1213 \cr
\mathrm F_{4} & 0.4 & 0.8 & 0.8 \cr
\mathrm G_{2} & 0 & 0 & 0 
\end{array} \]
\end{tabular}
\caption{Runtimes for {\sc SplitMaximalToralSubalgebra3}}\label{tab_stsa3_runtimes_byalg}
\end{table}

\begin{table}
\ \\

\vspace{-10mm} \hspace{-5mm}
\begin{tabular}{p{65mm}p{65mm}}	
\[ \begin{array}{l...} 
R & \multicolumn{1}{c}{\GF(2)} & \multicolumn{1}{c}{\GF(2^6)} & \multicolumn{1}{c}{\GF(2^{10})} \cr
\hline
\hline
\mathrm A_{1}^{\mathrm{ad}} & 0 & 0 & 0 \cr
\mathrm A_{1}^{\mathrm{SC}} & 0 & 0 & 0 \cr
\mathrm A_{2}^{\mathrm{ad}} & 0 & 0 & 0 \cr
\mathrm A_{2}^{\mathrm{SC}} & 0 & 0 & 0 \cr
\mathrm A_{3}^{\mathrm{ad}} & 0.1 & 0.1 & 0.1 \cr
\mathrm A_{3}^{(2)}         & 0 & 0.1 & 0.1 \cr
\mathrm A_{3}^{\mathrm{SC}} & 0 & 0.1 & 0.1 \cr
\mathrm A_{4}^{\mathrm{ad}} & 0.1 & 0.4 & 0.3 \cr
\mathrm A_{4}^{\mathrm{SC}} & 0.1 & 0.4 & 0.3 \cr
\mathrm A_{5}^{\mathrm{ad}} & 0.4 & 1.8 & 1.2 \cr
\mathrm A_{5}^{(2)}         & 0.4 & 2.2 & 1.7 \cr
\mathrm A_{5}^{(3)}         & 0.4 & 1.8 & 1.2 \cr
\mathrm A_{5}^{\mathrm{SC}} & 0.4 & 2.3 & 1.6 \cr
\mathrm A_{6}^{\mathrm{ad}} & 1.1 & 6.2 & 4.9 \cr
\mathrm A_{6}^{\mathrm{SC}} & 1 & 6.2 & 4.1 \cr
\mathrm A_{7}^{\mathrm{ad}} & 4.3 & 18 & 13 \cr
\mathrm A_{7}^{(2)}         & 4.3 & 20 & 17 \cr
\mathrm A_{7}^{(4)}         & 3.4 & 20 & 16 \cr
\mathrm A_{7}^{\mathrm{SC}} & 3.8 & 22 & 14 \cr
\mathrm A_{8}^{\mathrm{ad}} & 9.9 & 51 & 35 \cr
\mathrm A_{8}^{(3)}         & 9.8 & 46 & 43 \cr
\mathrm A_{8}^{\mathrm{SC}} & 9.6 & 50 & 37 \cr
\mathrm B_{2}^{\mathrm{ad}} & 0 & 0 & 0 \cr
\mathrm B_{2}^{\mathrm{SC}} & 0 & 0.1 & 0.1 \cr
\mathrm B_{3}^{\mathrm{ad}} & 0.1 & 0.2 & 0.2 \cr
\mathrm B_{3}^{\mathrm{SC}} & 0.1 & 0.3 & 0.3 \cr
\mathrm B_{4}^{\mathrm{ad}} & 0.3 & 1.6 & 1.4 \cr
\mathrm B_{4}^{\mathrm{SC}} & 0.4 & 2.4 & 1.9 \cr
\mathrm B_{5}^{\mathrm{ad}} & 1.6 & 8.9 & 7.3 \cr
\mathrm B_{5}^{\mathrm{SC}} & 1.7 & 10 & 7.1 \cr
\mathrm B_{6}^{\mathrm{ad}} & 6 & 38 & 31 \cr
\mathrm B_{6}^{\mathrm{SC}} & 8.1 & 45 & 28 \cr
\mathrm B_{7}^{\mathrm{ad}} & 20 & 121 & 89 \cr
\mathrm B_{7}^{\mathrm{SC}} & 22 & 128 & 114 \cr
\mathrm B_{8}^{\mathrm{ad}} & 70 & 353 & 319 \cr
\mathrm B_{8}^{\mathrm{SC}} & 77 & 405 & 311 \cr
\mathrm C_{3}^{\mathrm{ad}} & 0.1 & 0.4 & 0.3 \cr
\mathrm C_{3}^{\mathrm{SC}} & 0.1 & 0.4 & 0.3 
\end{array} \] &
\[ \begin{array}{l...} 
R & \multicolumn{1}{c}{\GF(2)} & \multicolumn{1}{c}{\GF(2^6)} & \multicolumn{1}{c}{\GF(2^{10})} \cr
\hline
\hline
\mathrm C_{4}^{\mathrm{ad}} & 0.8 & 2.4 & 1.7 \cr
\mathrm C_{4}^{\mathrm{SC}} & 0.3 & 2 & 1.8 \cr
\mathrm C_{5}^{\mathrm{ad}} & 4.7 & 20 & 8.1 \cr
\mathrm C_{5}^{\mathrm{SC}} & 1.6 & 12 & 8.5 \cr
\mathrm C_{6}^{\mathrm{ad}} & 35 & 111 & 50 \cr
\mathrm C_{6}^{\mathrm{SC}} & 6.7 & 43 & 41 \cr
\mathrm C_{7}^{\mathrm{ad}} & 97 & 244 & 218 \cr
\mathrm C_{7}^{\mathrm{SC}} & 21 & 156 & 129 \cr
\mathrm C_{8}^{\mathrm{ad}} & 375 & 1059 & 1099 \cr
\mathrm C_{8}^{\mathrm{SC}} & 67 & 510 & 472 \cr
\mathrm D_{4}^{\mathrm{ad}} & 0.4 & 0.8 & 0.5 \cr
\mathrm D_{4}^{(1)}         & 0.3 & 0.8 & 0.8 \cr
\mathrm D_{4}^{(n-1)}       & 0.1 & 0.8 & 0.7 \cr
\mathrm D_{4}^{(n)}         & 0.2 & 0.9 & 0.7 \cr
\mathrm D_{4}^{\mathrm{SC}} & 0.2 & 1.1 & 0.8 \cr
\mathrm D_{5}^{\mathrm{ad}} & 0.9 & 4.2 & 3.3 \cr
\mathrm D_{5}^{(1)}         & 0.8 & 5.2 & 3.6 \cr
\mathrm D_{5}^{\mathrm{SC}} & 0.9 & 4.6 & 4.1 \cr
\mathrm D_{6}^{\mathrm{ad}} & 5.8 & 20 & 16 \cr
\mathrm D_{6}^{(1)}         & 4 & 22 & 14 \cr
\mathrm D_{6}^{(n-1)}       & 6.4 & 24 & 15 \cr
\mathrm D_{6}^{(n)}         & 5.4 & 23 & 15 \cr
\mathrm D_{6}^{\mathrm{SC}} & 5.3 & 25 & 20 \cr
\mathrm D_{7}^{\mathrm{ad}} & 24 & 98 & 53 \cr
\mathrm D_{7}^{(1)}         & 14 & 79 & 53 \cr
\mathrm D_{7}^{\mathrm{SC}} & 12 & 78 & 62 \cr
\mathrm D_{8}^{\mathrm{ad}} & 57 & 293 & 166 \cr
\mathrm D_{8}^{(1)}         & 55 & 218 & 184 \cr
\mathrm D_{8}^{(n-1)}       & 120 & 266 & 224 \cr
\mathrm D_{8}^{(n)}         & 66 & 322 & 162 \cr
\mathrm D_{8}^{\mathrm{SC}} & 54 & 314 & 228 \cr
\mathrm E_{6}^{\mathrm{ad}} & 5.7 & 33 & 31 \cr
\mathrm E_{6}^{\mathrm{SC}} & 5.2 & 33 & 31 \cr
\mathrm E_{7}^{\mathrm{ad}} & 65 & 268 & 258 \cr
\mathrm E_{7}^{\mathrm{SC}} & 72 & 300 & 217 \cr
\mathrm E_{8} & 492 & 4261 & 3795 \cr
\mathrm F_{4} & 1.5 & 8.2 & 5.6 \cr
\mathrm G_{2} & 0 & 0.1 & 0 
\end{array} \]
\end{tabular}
\caption{Runtimes for {\sc SplitMaximalToralSubalgebra2}}\label{tab_stsa2_runtimes_byalg}
\end{table}

We have implemented the algorithms discussed in the {\Magma} computer algebra system \cite{Magma}, and comment on the performance of the implementation in this section.
We present timings of runs of the {\sc SplitMaximalToralSubalgebra2} and {\sc SplitMaximalToralSubalgebra3} algorithms on Lie algebras of split simple algebraic groups over six different fields. In every case the input of the algorithm was the appropriate Chevalley Lie algebra, given as a multiplication table on a uniformly random basis. In Table \ref{tab_stsa3_runtimes_byalg} and in Figure \ref{fig_stsa_runtimes_byalg3}, the algorithm was run for Lie algebras up to rank $8$, over fields of size $3$, $3^6$, and $3^{10}$; in Table \ref{tab_stsa2_runtimes_byalg} and in Figure \ref{fig_stsa_runtimes_byalg}, for Lie algebras up to rank $8$, over fields of size $2$, $2^6$, and $2^{10}$; and in Figures \ref{fig_stsa_runtimes_byfld3} and \ref{fig_stsa_runtimes_byfld} for seven different Lie algebras, varying the size of the field between $2$ and $2^{40}$ and between $3$ and $3^{40}$, respectively. All timings are in seconds and were created using a development version of {\Magma}, 2.18, on a 2GHz AMD processor.

We remark that in a sense the timings presented represent the worst possible: because the multiplication table is given on a uniformly random basis it is very dense, making multiplication an exceptionally expensive operation. In practice when a Lie algebra arises from other computations, the multiplication table could be much sparser and the algorithm therefore much faster. 

Despite the fact that we have not commented on the computational complexity of our algorithm, the four graphs give some indication. In particular, Figures \ref{fig_stsa_runtimes_byalg3} and \ref{fig_stsa_runtimes_byalg} suggest a dependence on the rank $n$ of approximately $\bigO(n^8)$ and Figures \ref{fig_stsa_runtimes_byfld3} and \ref{fig_stsa_runtimes_byfld} suggest a linear dependence on the logarithm of the size of the field. This leads to an approximate complexity of $\bigO(n^8 \log(q))$, where $n$ is the reductive rank of the Lie algebra and $q$ the size of the field. This is not as bad as it may seem at first sight, as a single Lie multiplication already takes $\bigO(n^6 \log(q))$ time.

\section*{Acknowledgements}
The author would like to thank Arjeh Cohen for numerous fruitful discussions on this topic and the anonymous reviewers for their thorough evaluation and their valuable comments.

\begin{figure}
	\includegraphics[width=135mm]{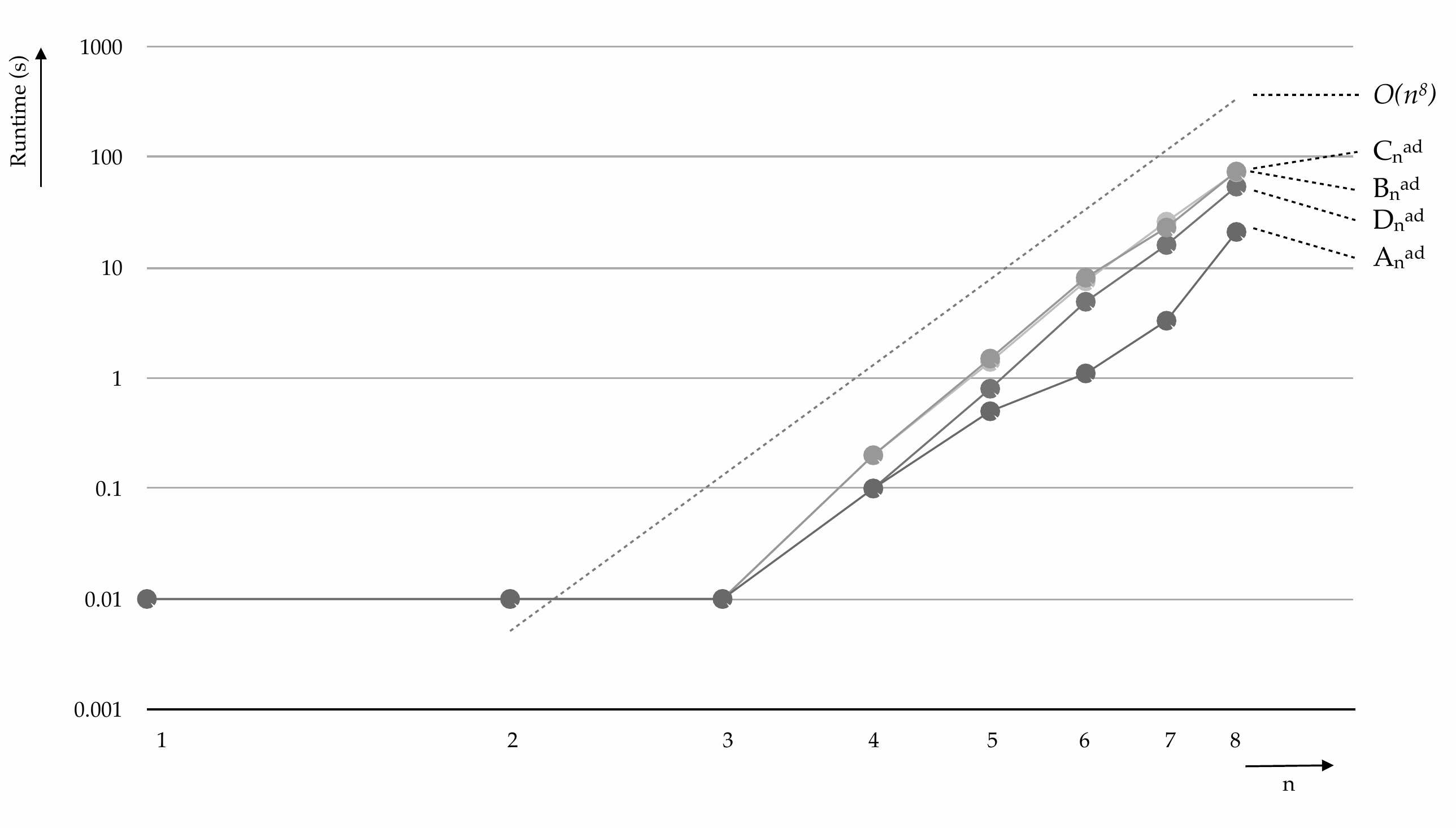}
	\caption{Runtimes for {\sc SplitMaximalToralSubalgebra3} for $\F = \GF(3^6)$}\label{fig_stsa_runtimes_byalg3}
\end{figure}

\begin{figure}
	\includegraphics[width=145mm]{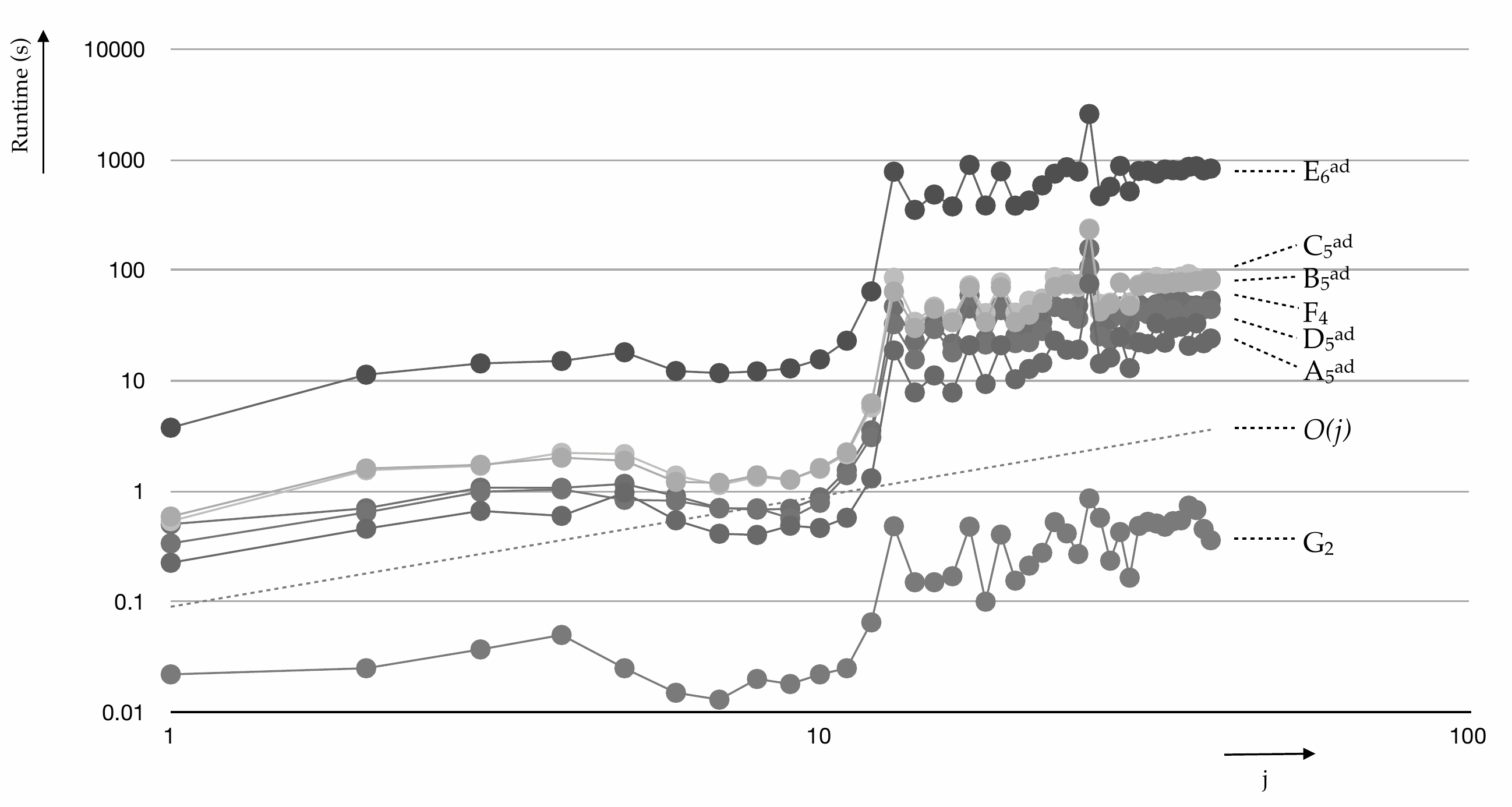}
	\caption{Runtimes for {\sc SplitMaximalToralSubalgebra3} for $\F = \GF(3^j)$, $1 \leq j \leq 40$}\label{fig_stsa_runtimes_byfld3}
\end{figure}

\begin{figure}
	\includegraphics[width=135mm]{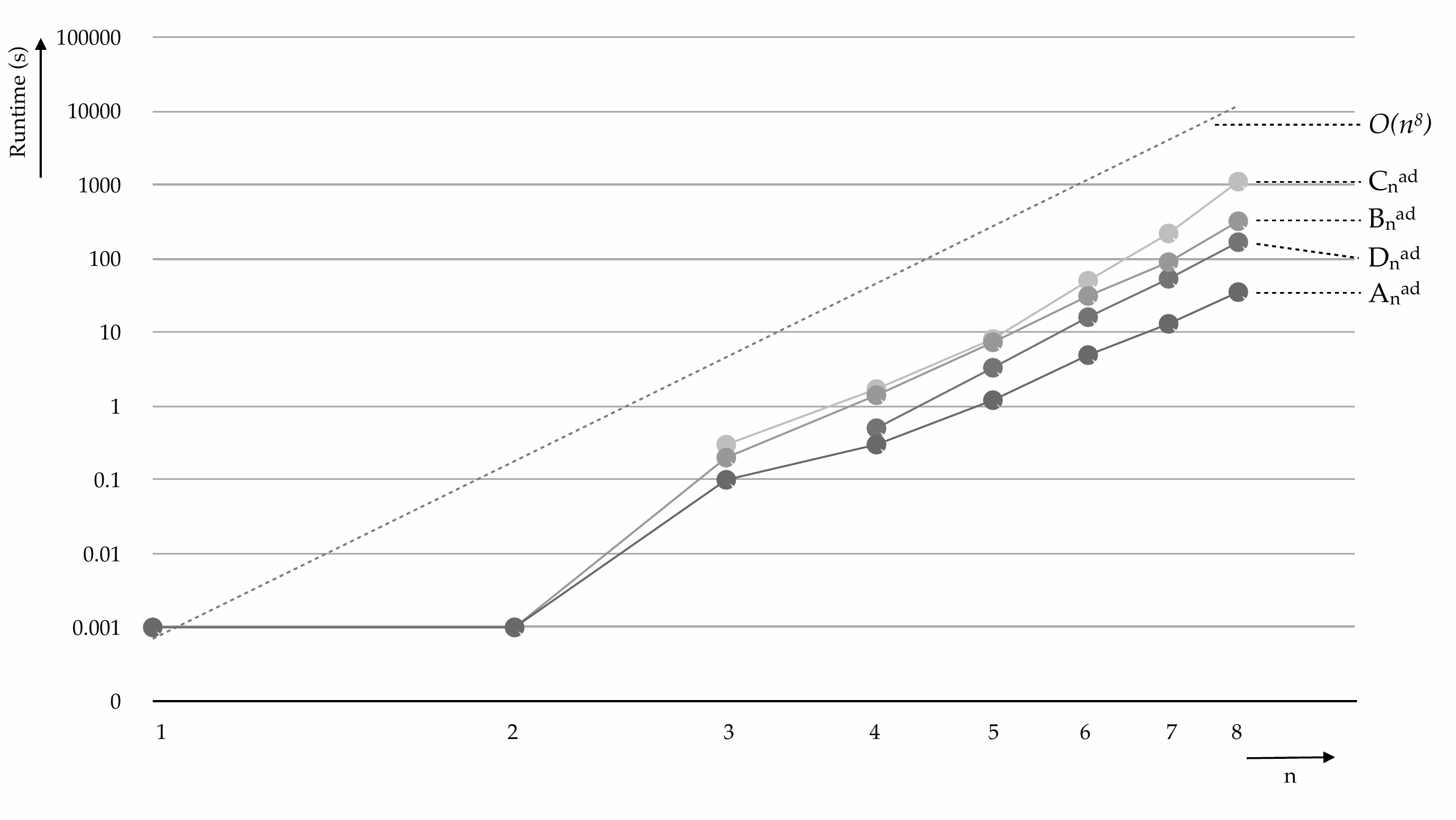}
	\caption{Runtimes for {\sc SplitMaximalToralSubalgebra2} for $\F = \GF(2^6)$}\label{fig_stsa_runtimes_byalg}
\end{figure}

\begin{figure}
	\includegraphics[width=145mm]{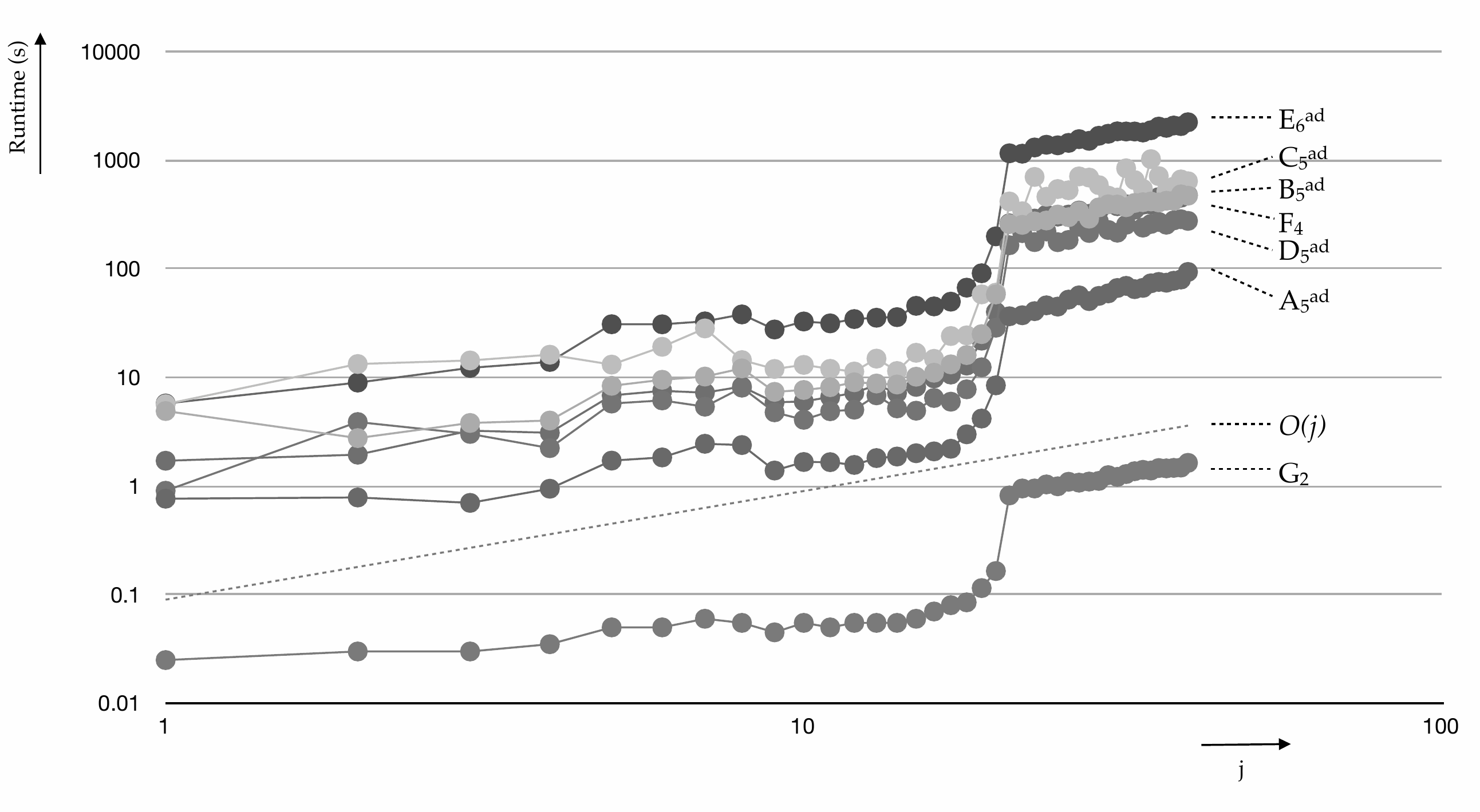}
	\caption{Runtimes for {\sc SplitMaximalToralSubalgebra2} for $\F = \GF(2^j)$, $1 \leq j \leq 40$}\label{fig_stsa_runtimes_byfld}
\end{figure}

\def\cprime{$'$}

\end{document}